\documentclass [a4paper,reqno, 11pt]{amsart}
\usepackage[english]{babel}

\usepackage{fullpage}
\usepackage{amsmath, amsfonts, amsthm}
\usepackage{graphicx}
\usepackage[colorlinks=true, allcolors=blue]{hyperref}
\usepackage{mathtools} 
\allowdisplaybreaks 
\usepackage{comment} 
\usepackage[capitalize]{cleveref}
\usepackage{bbm}
\usepackage{nicematrix}
\usepackage{tikz}
\usepackage{array}
\usepackage{wrapfig}
\usepackage{combelow}

\usepackage{enumitem}
\usepackage{algorithm}
\usepackage{algorithmic}

\theoremstyle{plain}
\newtheorem{theorem}{Theorem}[section]

\theoremstyle{plain}

\theoremstyle{plain}
\newtheorem{proposition}[theorem]{Proposition}

\theoremstyle{plain}
\newtheorem{lemma}[theorem]{Lemma}

\theoremstyle{definition}
\newtheorem{definition}[theorem]{Definition}

\theoremstyle{definition}
\newtheorem{example}[theorem]{Example}

\theoremstyle{definition}
\newtheorem{remark}[theorem]{Remark}

\theoremstyle{definition}

\theoremstyle{definition}

\theoremstyle{definition}
\newtheorem{question}[theorem]{Question}

\usepackage[indent,skip=.2\baselineskip]{parskip}

\newcommand{\calM}{\mathcal{M}}
\newcommand{\calV}{\mathcal{V}}
\newcommand{\frakm}{\mathfrak{m}}

\DeclareMathOperator{\LT}{LT}

\newcommand{\arxiv}[1]{\href{http://arxiv.org/abs/#1}{{\tt arXiv:#1}}}

\title{Partial fraction decompositions\\ on hyperplane arrangements}
\author{Claire de Korte, Teresa Yu}

\date{\today}

\begin{document}

\begin{abstract}
    We study partial fraction decompositions (PFDs) in several variables using tools from commutative algebra. We give criteria for when a rational function with poles on a hyperplane arrangement has a desirable PFD. Our criteria are obtained by examining the primary decomposition of ideals coming from hyperplane arrangements.
    We then present an algorithm for finding a PFD that satisfies properties desired for simplifying the calculation of scattering amplitudes. We demonstrate the effectiveness of this algorithm by computing practical examples coming from Feynman integrals.
\end{abstract}

\maketitle

\section{Introduction}\label{sec:intro}

Partial fraction decompositions (PFDs) of rational functions in several variables appear frequently in high-energy physics. Examples include abstract applications, such as the recasting of scattering amplitudes as canonical forms of polytopes \cite{ABL}, and computational applications like developing efficient algorithms to compute Feynman integrals \cite{HvM}. Despite their ubiquity in physics, studies of PFDs seldom appear in the mathematical literature \cite{BBD+,Bec,Lei}. There has been interest from physicists to utilize algebro-geometric techniques in order to compute certain PFDs \cite{BWW+,CDE,DP}.
In this paper we study PFDs from the perspective of commutative algebra, by using primary decomposition and algorithms based upon Gr\"obner bases. This is done by recasting the existence of PFDs as an ideal membership problem.

Let $R=K[x_1,\ldots,x_r]$ be a polynomial ring over a field $K$, and let $f,g\in R$ be nonzero polynomials. We are interested in \emph{partial fraction decompositions (PFDs)} of the rational function $f/g$; these are expressions of the function as:
\begin{equation*}
        \frac{f}{g}=\sum_{i=1}^m\frac{f_i}{g_i},\qquad \deg(f_i)<\deg(f),\,\deg(g_i)<\deg(g).
    \end{equation*}

\begin{example}\label{eg:intro}
To see where the complexity in PFDs can arise, consider the following example of a rational function with two different PFDs:
\begin{equation*}
\begin{split}
    \frac{13y-6x}{(y-3x)(x+y)(x-2y)}&=\frac{19}{12x(x+y)}-\frac{2}{15x(x-2y)}-\frac{33}{20x(y-3x)}\\
    &=\frac{1}{(x+y)(x-2y)}+\frac{2}{(y-3x)(x-2y)}-\frac{5}{(y-3x)(x+y)}.
\end{split}
\end{equation*}
\end{example}
\noindent This example demonstrates that:
\begin{enumerate}
    \item rational functions do not necessarily have a unique PFD; and
    \item certain PFDs of rational functions can introduce \emph{spurious poles}, i.e., poles in the terms of the PFD that do not appear in the rational function itself (see Definition~\ref{defn:spur pole} for a precise definition).
\end{enumerate}

In applications, one would like to know when a rational function $f/g$ has a PFD, and if so, how to make the PFD ``optimal." Here, optimal can refer to a number of parameters, including minimizing the number of terms $m$ in the PFD, maximizing the degrees of the numerators $f_i$, and avoiding spurious poles (the introduction of such poles can complicate calculations and obscure physical properties and symmetries \cite{Hod}).

In this paper, we ask the following question: given a rational function $f/g$ such that $g=\ell_1\cdots\ell_n$ can be factored into linear polynomials in $R$, can we determine when an optimal PFD for $f/g$ exists? We desire a combinatorial answer in terms of the hyperplane arrangement associated with the linear polynomials $\ell_1,\ldots,\ell_n$. We say that such a rational function $f/g$ has a \emph{PFD (of degree $d$)} if it can be expressed as:
\begin{equation}\label{eqn:d PFD}
    \frac{f}{g}=\frac{f}{\ell_1\cdots\ell_n}=\sum_{\{j_1,\ldots,j_{n-d}\}\subset[n]} \frac{h_{j_1\cdots j_{n-d}}}{\ell_{j_1}\cdots\ell_{j_{n-d}}},\qquad \deg(\ell_i)=1,\,\deg(h_{j_1\cdots j_{n-d}})\leq \deg(f)-d.
\end{equation}

The main results (Theorems~\ref{thm: PFD iff proj} and \ref{thm: PFD iff affine}) of this paper characterize the answer to this question in terms of the numerator $f$'s geometric properties relative to the hyperplane arrangement; the main result for hyperplane arrangements in projective space is the following.

\begin{theorem}\label{thm:intro-homog}
    Let $f\in K[x_1,\ldots,x_r]$ be a homogeneous polynomial of degree $\deg(f)\geq d\geq 1$, and let $\ell_1,\ldots,\ell_n$ be nonzero linear forms with $n\geq d$. 
    Then $f/(\ell_1\cdots\ell_n)$ has a PFD of degree $d$ as in (\ref{eqn:d PFD}) if and only if the numerator $f$ vanishes to order at least $d-n+|S|$ on all flats $S$ of size at least $n-d+1$ of the hyperplane arrangement associated with the linear forms. 
\end{theorem}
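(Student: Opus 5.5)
The plan is to turn the existence of a PFD into an ideal membership problem. Multiplying (\ref{eqn:d PFD}) through by $\ell_1\cdots\ell_n$ shows that $f/(\ell_1\cdots\ell_n)$ has a PFD of degree $d$ if and only if
\[
f\in I_d(\mathcal{A}) := \Bigl(\textstyle\prod_{i\in T}\ell_i \;:\; T\subset[n],\ |T|=d\Bigr).
\]
Here the complement of each index set $\{j_1,\ldots,j_{n-d}\}$ is a $d$-subset $T$. The degree bound $\deg h\le \deg f-d$ costs nothing: $f$ and the generators are homogeneous, so we may replace each coefficient $h_T$ by its homogeneous component of degree $\deg f-d$.

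For a flat $S$, let $P_S$ be the prime ideal generated by $\{\ell_i : i\in S\}$. The statement then becomes the ideal equality
\[
I_d(\mathcal{A})=\bigcap_{S \text{ flat},\ |S|\ge n-d+1} P_S^{\,d-n+|S|}.
\]
Since $P_S$ is generated by linear forms, $P_S^a$ is $P_S$-primary, and "vanishing to order $a$ on $S$" means membership in $P_S^a$. I will prove this equality, allowing repeated forms (multisets of hyperplanes) throughout, because the induction produces repetitions.

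\textbf{Inclusion $\subseteq$.} This direction is easy. A generator $\prod_{i\in T}\ell_i$ with $|T|=d$ has at least $|T\cap S|\ge d-(n-|S|)$ factors lying in $P_S$. Hence it lies in $P_S^{d-n+|S|}$.

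\textbf{Inclusion $\supseteq$.} I argue by induction on $n$, using deletion with respect to $\ell=\ell_n$. Let $\mathcal{A}'$ be the arrangement $\ell_1,\ldots,\ell_{n-1}$. Then
\[
I_d(\mathcal{A})=\ell_n\, I_{d-1}(\mathcal{A}')+I_d(\mathcal{A}').
\]
Let $J$ denote the right-hand intersection, and take $f\in J$.

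\emph{Step 1: reduction modulo $\ell_n$.} Pass to $\bar R=R/(\ell_n)$, a polynomial ring in one fewer variable. The images $\bar\ell_1,\ldots,\bar\ell_{n-1}$ form the restricted multiarrangement $\mathcal{A}''$. I will check that $\bar f$ satisfies the vanishing conditions for $(\mathcal{A}'',d)$. The flats of $\mathcal{A}''$ correspond to flats $S\ni n$ of $\mathcal{A}$, and the images of the $P_S$ are the relevant primes. Induction on dimension then gives $\bar f\in I_d(\mathcal{A}'')$. Lifting the generators, we obtain $f=g+\ell_n q$ with $g\in I_d(\mathcal{A}')\subseteq I_d(\mathcal{A})\subseteq J$.

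\emph{Step 2: the quotient $q$.} Since $g\in J$, we get $\ell_n q\in J$, so $q\in (J:\ell_n)$. Each component is primary, so the colon can be computed componentwise:
\begin{itemize}
\item if $n\notin S$, then $\ell_n\notin P_S$, and the component $P_S^{a}$ is unchanged;
\item if $n\in S$, the colon gives $P_S^{a-1}$.
\end{itemize}
Now take a flat $S'$ of $\mathcal{A}'$ of size $s$. Its closure in $\mathcal{A}$ is either $S'$ or $S'\cup\{n\}$. In both cases the required order is $(d-1)-(n-1)+s$, which is exactly the condition for $(\mathcal{A}',d-1)$. So by induction on $n$, $q\in I_{d-1}(\mathcal{A}')$, and therefore $f\in I_d(\mathcal{A})$.

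\textbf{Base cases.} When $d=n$ the ideal is principal, generated by $\ell_1\cdots\ell_n$, and the conditions on the singleton flats force divisibility. When $d=1$ the ideal is $P_{[n]}$, which matches the single condition for the top flat.

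\textbf{Main obstacle.} I expect the hardest part to be Step 1. It requires formulating the statement for multiarrangements, with orders counted with multiplicity, so that restriction stays inside the induction. It also requires verifying that the flat conditions descend correctly modulo $\ell_n$, in particular that $(P_S^a+(\ell_n))/(\ell_n)$ is the correct power of the restricted prime, and that no extra conditions appear. If this gets messy, my fallback is to work with a generic linear change of coordinates and a Gröbner/Hilbert-function comparison: show that both ideals have the same Hilbert function, for instance via the known formula for products of linear forms, and combine this with the inclusion $\subseteq$.
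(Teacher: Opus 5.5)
Your proposal is correct. Its first step coincides with the paper's: clearing denominators to reduce to $f\in I_{L,d}$. Your remark about taking homogeneous components to meet the bound $\deg h\le\deg f-d$ is a detail the paper skips. After that the paper simply invokes Theorem~\ref{thm:flats-pri-decomp}, which it does not prove but quotes from \cite[Theorem 2.2]{BTX}. You instead prove that primary decomposition yourself by a deletion--restriction induction.

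The step you flagged as the main obstacle goes through without any Hilbert-function fallback:
\begin{itemize}
\item The flats of the restriction to $\ell_n=0$ are exactly $F\setminus\{n\}$ for flats $F\ni n$ of $\mathcal{A}$.
\item The threshold $(n-1)-d+1$ and the order $d-(n-1)+(|F|-1)=d-n+|F|$ match the original conditions for $F$.
\item The image of $P_F^a$ in $R/(\ell_n)$ is trivially $\bar P_F^{\,a}$.
\end{itemize}
Step 2 is also right: in both cases of the closure of $S'$ you get $q\in P_{S'}^{d-n+|S'|}$.

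Three small bookkeeping points need care.
\begin{itemize}
\item Forms parallel to $\ell_n$ become $0$ in $R/(\ell_n)$. The inductive statement should therefore allow zero forms, treated as loops with $P_S$ possibly $0$ (as the paper's Theorem~\ref{thm:flats-pri-decomp} allows). Alternatively, drop them and shift $n$ accordingly; the conditions agree either way.
\item Always delete a nonzero form, so that $R/(\ell_n)$ is a polynomial ring and $(P_S^a:\ell_n)=P_S^{a-1}$.
\item In the base case $d=n$, ``singleton flats'' should read rank-one flats, i.e., parallel classes. A class of multiplicity $m$ forces $\ell^m\mid f$.
\end{itemize}

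Your route gives a self-contained, characteristic-free proof. It uses only two facts: $P_S^a$ is $P_S$-primary, and $(P_S^a:\ell)=P_S^{a-1}$ for a nonzero linear $\ell\in P_S$. It also handles repeated and zero forms uniformly. The paper's route buys brevity by outsourcing exactly this commutative algebra to \cite{BTX}.
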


Our approach to proving Theorem~\ref{thm:intro-homog} is with commutative algebra: given linear forms $\ell_1,\ldots,\ell_n$ of the polynomial ring $R=K[x_1,\ldots,x_r]$, we use the primary decomposition of the following ideal:
\begin{equation}\label{eq:THE ideal}
    I_{L,d}=\langle \ell_{i_1}\cdots\ell_{i_d}:1\leq i_1<\cdots<i_d\leq n\rangle.
\end{equation}
Existence of a PFD of order $d$ for the rational function $f/(\ell_1\cdots\ell_n)$ is equivalent to the numerator $f$ being in the ideal $I_{L,d}$, and a primary decomposition of $I_{L,d}$ gives an equivalent geometric condition for when this occurs.

One can use the computer algebra software \texttt{Macaulay2} \cite{M2} to determine the existence of and algorithmically compute PFDs of the form in 
(\ref{eqn:d PFD}). Our algorithm (Algorithm~\ref{algo:PFD}) for finding a PFD is based upon Gr\"obner basis algorithms. It satisfies conditions \ref{wishlist(i)}, \ref{wishlist(ii)}, \ref{wishlist(iii)}, and \ref{wishlist(iv)} of the ``wishlist'' for a good partial fraction algorithm, described by Heller and von Manteuffel in \cite[\S 2.3]{HvM} and based upon work of Le\u{\i}nartas \cite{Lei}:
\begin{enumerate}[label=(\roman*), ref=(\roman*)]
    \item \label{wishlist(i)}  the algorithm provides a unique answer;
    \item  \label{wishlist(ii)} the algorithm does not introduce spurious denominator factors;
    \item \label{wishlist(iii)} the algorithm commutes with summation;
    \item \label{wishlist(iv)} the algorithm eliminates any spurious poles that are present in the input.
\end{enumerate}

We now discuss the organization and contributions of this paper. In \S\ref{sec: primary decompositions}, we discuss the ideals $I_{L,d}$ in (\ref{eq:THE ideal}) defined from hyperplane arrangements. We present primary decompositions for these ideals (Theorems~\ref{thm:flats-pri-decomp} and \ref{thm:affine-pri-decomp}), which are given in terms of the matroid associated with the hyperplane arrangement. 
We then apply these decompositions in \S\ref{sec:braid} to study the ideals $I_{L,d}$ where $L$ is the braid arrangement. In \S\ref{sec: polytope adjoint polynomial}, we also use the primary decomposition to deduce vanishing behavior of adjoint polynomials arising from polytopes; these polynomials are objects of much study in physics, combinatorics, and algebraic geometry. We then return to our main motivation of PFDs in \S\ref{sec: PFDs}: in this section, we give criteria for when PFDs of degree $d$ exist (Theorems~\ref{thm: PFD iff proj} and \ref{thm: PFD iff affine}). We present Algorithms~\ref{algo:reduced} and \ref{algo:PFD} to determine the existence of and compute PFDs; we also demonstrate how our algorithms satisfy all four conditions \ref{wishlist(i)}--\ref{wishlist(iv)} of a good PFD algorithm. In \S\ref{sec:physics}, we then show how our criteria and algorithm can be applied to PFDs coming from two different contexts from physics: Feynman integrals (\S\ref{subsec:feynman}) and flat-space wavefunction coefficients (\S\ref{subsec:wavefunc}). We conclude in \S\ref{sec:future} with remarks on the outlook and directions for future investigation for the algebraic study of PFDs.

Code for the algorithms and examples presented in this paper are publicly available at:

\begin{center}
    \small
    \url{https://github.com/CJMDK/PFDs}. 
\end{center}

\begin{remark}
    After initially posting our paper, we were informed that Theorem~\ref{thm:flats-pri-decomp} was already proven in \cite{BTX, Toh}. We thank an anonymous referee and \cb{S}tefan Toh\u{a}neanu for referring us to these results. Our primary contribution, connecting PFDs to commutative algebra, remains original. 
\end{remark}

\section{Primary decompositions}\label{sec: primary decompositions}

We fix a field $K$, let $r,n\geq 1$ be arbitrary positive integers, and let $R=K[x_1,\ldots,x_r]$ denote the polynomial ring in $r$ variables. The central geometric objects of this paper are hyperplane arrangements, defined by $n$ linear polynomials $L=(\ell_1,\ldots,\ell_n)$, $\ell_i\in R$; we consider arrangements in both affine and projective spaces, with the $\ell_i$ homogeneous in the latter setting.

Let $d$ be a positive integer with $1\leq d\leq n$. Given a hyperplane arrangement $L=(\ell_1,\ldots,\ell_n)$, we consider the ideal $I_{L,d}\subset R$ defined by $d$-fold products of the linear forms:
\begin{equation*}
I_{L,d}=\langle\ell_{i_1}\cdots\ell_{i_d}:1\leq i_1<\cdots<i_d\leq n\rangle.
\end{equation*}
We allow for the cases that some of the $\ell_i=0$ are zero, or that the $\ell_i$ may not all be distinct, i.e., $\ell_i=\ell_j$ for for some $i\neq j$. However, we assume that $I_{L,d}\neq 0$ is not the zero ideal, so not all of the $\ell_i=0$ are zero. The purpose of this section is to present primary decompositions for $I_{L,d}$, first in the case of a generic hyperplane arrangement and then for arbitrary arrangements.

\subsection{Generic hyperplane arrangements in projective space}\label{subsec:generic primary decomp}

Before tackling the case of an arbitrary hyperplane arrangement, we focus first on \emph{generic} arrangements in projective space, i.e., the hyperplanes are in linearly general position; the matroid corresponding to the arrangement is the \emph{uniform matroid} (see Remark~\ref{rmk:uniform matroid}).
Here, we let $n\geq r$ be arbitrary positive integers, and fix $d=n-r+1$.

The main result of this subsection (Theorem~\ref{thm:generic-m^d}) shows that $I_{L,d}=\mathfrak{m}^d$ is the $d$th power of the homogeneous maximal ideal $\mathfrak{m}\subset R$. This was previously proven by Toh\u{a}neanu in the context of coding theory \cite[Theorem 3.1]{Toh}. We give an alternative proof.

To state the following proposition, we introduce some notation. Write each linear form $\ell_i\in R$ in the form $\ell_i=\sum_{j=1}^r a_{ij}x_j$, with $a_{ij}\in K$. Define $A=(a_{ij})$ to be the $(n\times r)$-matrix of these coefficients. For $1\leq i_1<\cdots<i_r\leq n$, denote by $A[i_1,\ldots, i_r]$ the $r$-minor of $A$ obtained by taking the submatrix of $A$ with rows labeled by $\ell_{i_1},\ldots,\ell_{i_r}$. Now, define $\calM$ to be the $(\binom{n}{d}\times\binom{n}{d})$-matrix of coefficients, with rows labeled by $d$-fold products $\ell_{i_1}\cdots\ell_{i_d}$ of the linear forms, and columns labeled by degree $d$ monomials of $R$.

\begin{proposition}\label{prop: linear independence of products}
With the notation set above, we have:
\begin{equation}
\det(\mathcal{M})=\pm\prod_{1\leq i_1<\cdots<i_r\leq n}A[i_1,\ldots, i_r]. 
\end{equation}
In particular, if every $r$-subset of the linear forms $\ell_1,\ldots,\ell_n$ is linearly independent, then the set of $d$-fold products of the linear forms is also linearly independent.
\end{proposition}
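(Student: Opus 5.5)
The plan is to treat $\det(\calM)$ as a polynomial in the $nr$ entries $a_{ij}$ of a \emph{generic} matrix $A$. We work in $\mathbb{Z}[a_{ij}]$ (or $K[a_{ij}]$) and prove the identity there. It then specializes to any field and any actual arrangement. First a bookkeeping check: since $d=n-r+1$, the number of degree-$d$ monomials in $r$ variables is $\binom{d+r-1}{r-1}=\binom{n}{r-1}=\binom{n}{d}$, so $\calM$ is indeed square. Each row of $\calM$ is the coefficient vector of $\ell_{i_1}\cdots\ell_{i_d}$, and its entries are multihomogeneous in the rows of $A$.

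\textbf{Step 1 (divisibility).} Fix $I=\{i_1<\cdots<i_r\}$. If $A[i_1,\ldots,i_r]=0$, then $\ell_{i_1},\ldots,\ell_{i_r}$ span a space of dimension at most $r-1$. Let $J=[n]\setminus I$, so $|J|=n-r=d-1$. The $r$ products $\ell_J\cdot\ell_{i}$ for $i\in I$, where $\ell_J=\prod_{j\in J}\ell_j$, are all $d$-fold products. They lie in $\ell_J\cdot\operatorname{span}(\ell_i:i\in I)$, which has dimension at most $r-1$, so these $r$ rows of $\calM$ are linearly dependent and $\det(\calM)=0$. Hence $\det(\calM)$ vanishes on the hypersurface $\{A[I]=0\}$. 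The generic maximal minor $A[I]$ is irreducible, being the determinant of a generic $r\times r$ matrix, so $A[I]$ divides $\det(\calM)$. Distinct $I$ give non-associate minors because they involve different sets of variables. By unique factorization, $\prod_I A[I]$ divides $\det(\calM)$.

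\textbf{Step 2 (degree count).} Next I compare multidegrees in each row $(a_{i1},\ldots,a_{ir})$ of $A$. In $\det(\calM)$, row $i$ of $A$ appears with degree equal to the number of $d$-subsets containing $i$, namely $\binom{n-1}{d-1}$; each such row of $\calM$ is linear in the $i$th row of $A$. In $\prod_I A[I]$, the degree is the number of $r$-subsets containing $i$, namely $\binom{n-1}{r-1}$. Since $d-1=n-r$, these agree. Therefore the quotient $\det(\calM)/\prod_I A[I]$ has degree $0$ in every variable and is a constant $c\in\mathbb{Z}$.

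\textbf{Step 3 (the constant).} It remains to show $c=\pm1$, and this is the step I expect to be the real obstacle. My first attempt is to compare a single coefficient. For the lexicographic leading monomial of $\prod_I A[I]$, each minor contributes its diagonal term, and that leading monomial has coefficient $1$. I would then show that the same monomial occurs in $\det(\calM)$ with coefficient $\pm1$ via a triangularity argument. To do this, order the $d$-subsets and the monomials compatibly so that, after extracting that leading monomial, $\calM$ becomes triangular with monomial diagonal entries.

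If this gets messy, I would instead argue by induction on $n$, with $r$ adjusting so that $d$ stays fixed or drops by one. Specialize $\ell_n=x_r$ and order rows by whether they involve $\ell_n$. This gives a block structure: one block relates to the $(n-1)$-arrangement with $d-1$, and the other to the arrangement restricted to $x_r=0$. The induction hypothesis then shows $c=\pm1$.

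Finally, the ``in particular'' follows immediately. If every $r$-subset of the $\ell_i$ is linearly independent, then every $A[I]\neq0$, so $\det(\calM)\neq 0$ and the rows of $\calM$, that is, the $d$-fold products, are linearly independent. For this consequence alone, Steps 1 and 2 together with $c\neq 0$ suffice. Nonvanishing of $c$ is easy: a Vandermonde specialization $a_{ij}=t_i^{j-1}$ with distinct $t_i$ makes all minors nonzero, and independence of the products can be checked there by a direct interpolation argument.
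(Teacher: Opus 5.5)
Your outline is sound. Steps 1 and 2 are exactly the paper's argument: $\det\mathcal{M}$ vanishes on $\{A[I]=0\}$ because a dependence among the $\ell_i$, $i\in I$, can be multiplied by $\prod_{j\notin I}\ell_j$; the generic minor is irreducible; and a degree count finishes. Your row-by-row multidegree count is a slightly sharper form of the paper's $\binom{n}{r}r=\binom{n}{d}d$.

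All the remaining content is in Step 3. Your first plan there is the paper's Lemma~\ref{lem:LT-generic}, but as phrased it is not yet an argument. The generic $\mathcal{M}$ has no zero entries, so nothing literally becomes ``triangular''. Three things are actually needed:
\begin{enumerate}
\item an explicit bijection between $d$-subsets $T$ and degree-$d$ monomials; the paper uses stars and bars, where $T$ marks the star positions among $n$ slots and the exponent of $x_j$ is the number of stars between the $(j-1)$st and $j$th bar;
\item a check that the corresponding diagonal product has $a_{ij}$-exponent $\binom{i-1}{j-1}\binom{n-i}{r-j}$;
\item a reason why this monomial arises from only one term in the expansion of $\det\mathcal{M}$. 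The paper states this last point rather briefly.
\end{enumerate}

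Your fallback induction is a genuinely different route, and it works. Set $\ell_n=x_r$. Then the rows $T\ni n$ are supported on the columns with $\alpha_r\ge 1$, and both index sets have size $\binom{n-1}{d-1}$. Hence $\det\mathcal{M}=\pm\det P\det Q$, where:
\begin{itemize}
\item $P$ is the matrix for $\ell_1,\ldots,\ell_{n-1}$ with parameters $(n-1,d-1,r)$;
\item $Q$ is the matrix for the restrictions $\ell_i|_{x_r=0}$, $i<n$, with parameters $(n-1,d,r-1)$.
\end{itemize}
Both parameter sets satisfy $n'=d'+r'-1$. On the right-hand side, $A[I]$ with $n\in I$ becomes $\pm$ the $(r-1)$-minor of the first $r-1$ columns on the rows $I\setminus\{n\}$. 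So the specialized product is exactly the product of the two inductive right-hand sides. It is a nonzero polynomial, hence $c=\pm1$. The base cases are $d=1$, where $\mathcal{M}=A$, and $r=1$, where $\mathcal{M}$ is the $1\times 1$ matrix $\prod_t a_{t1}$.

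Comparing the two routes: yours avoids term orders entirely. The same block splitting, applied to the coefficient of $a_{11}^{\binom{n-1}{d-1}}$, is also what really justifies the paper's leading-term claim. The paper's route has the advantage of identifying the leading monomial of $\det\mathcal{M}$ explicitly.

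Finally, your interpolation idea is stronger than you use it. Take \emph{any} arrangement in general position over any field. For each $U$ with $|U|=r-1$, let $p_U$ be the point cut out by $\{\ell_u:u\in U\}$. Evaluating the $d$-fold products at the points $p_U$ gives a diagonal matrix with nonzero diagonal entries. Run this over $\overline{\mathbb{F}}_p$ for every prime $p$. It shows that the integer $c$ is divisible by no prime; $c$ is integral because the coefficient of the lex-leading monomial in $\prod_I A[I]$ is $1$. This gives $c=\pm1$ with no term-order computation at all.
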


\begin{proof}
 We may assume that $K$ is algebraically closed. Note that each \(A[i_1,\ldots, i_r]\) is irreducible, and so by Hilbert's Nullstellensatz,
\[A[i_1,\ldots, i_r]\text{ divides } \det(\mathcal{M}) \ \Leftrightarrow \ \mathcal{V}(A[i_1,\ldots, i_r])\subset \mathcal{V}(\det(\mathcal{M})).\]

We now show that $\mathcal{V}(A[i_1,\ldots, i_r])\subset \mathcal{V}(\det(\mathcal{M}))$. Suppose that there exists some relation between the \(\ell_{i}\), i.e., for some \(\{i_1,\ldots,i_r\}\subset[n]\), there exists \(c_1,\ldots, c_r\in K\) not all zero such that:
\begin{equation}\label{eq: linear dependence relation}
\sum_{j=1}^{r}c_i\ell_{i_j}=0.
\end{equation}
The existence of such a relation is equivalent to \(A[i_1,\ldots, i_r]\) being identically zero. In other words, the tuples \((a_{11},\ldots,a_{nr})\in K^{n\times r}\) for which such a relation (\ref{eq: linear dependence relation}) exist form \(\mathcal{V}(A[i_1,\ldots, i_r])\).
Define the set \(\{i'_1,\ldots,i'_{d-1}\}=[n]\setminus\{i_1,\ldots,i_r\}\). Multiplying each term in (\ref{eq: linear dependence relation}) by \(\ell_{i'_1}\cdots\ell_{i'_{d-1}}\), we find that:
\begin{equation}\label{eq: product linear independence relation}
\sum_{j=1}^{r}c_i\ell_{i_j}\ell_{i'_1}\cdots\ell_{i'_{d-1}}=0.
\end{equation}
Each \(c_i\) in (\ref{eq: product linear independence relation}) is multiplied by a unique collection of \(d\) linear forms.
But then we see that the set of $d$-fold products of the $\ell_i$ is linearly dependent, and so $\det(\calM)$ vanishes on \(\mathcal{V}(A[i_1,\ldots, i_r])\). Thus \(\mathcal{V}(A[i_1,\ldots, i_r])\subset \mathcal{V}(\det(\mathcal{M}))\). This implies that:
\[\det(\mathcal{M})=h\cdot\prod_{\substack{\{i_1,\ldots, i_r\}\subset[n] \\ i_i<\cdots<i_r}}A[i_1,\ldots, i_r],\qquad h\in K[a_{11},\ldots,a_{nr}].\]

Each $A[i_1,\ldots, i_r]$ is a degree $r$ polynomial, and so their product has degree $\binom{n}{r}\cdot r$ in the $a_{ij}$.
The entries of the matrix \(\mathcal{M}\) are each of degree \(d\) in the \(a_{ij}\), and so \(\det(\mathcal{M})\) has degree \(\binom{n}{d}\cdot d\). For \(n=r+d-1\), we have \(\binom{n}{r}\cdot r=\binom{n}{d}\cdot d\), so \(\det(\mathcal{M})\) and the product of $A[i_1,\ldots, i_r]$ are polynomials of the same degree. This is only possible if \(h\in K\). The fact that $h=\pm 1$ is proven in the
following lemma (Lemma \ref{lem:LT-generic}).
\end{proof}

\begin{lemma}\label{lem:LT-generic}
    Following the notation of the proof of Proposition \ref{prop: linear independence of products}, we have that $h=\pm 1$:
    \begin{equation}\label{eqn:lem-LT}
        \det(\mathcal{M})=\pm\prod_{1\leq i_i<\cdots<i_r\leq n}A[i_1,\ldots, i_r].
    \end{equation}\
\end{lemma}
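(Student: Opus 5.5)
We prove $h=\pm 1$ by induction on $r$ and $d$, with $n=r+d-1$ throughout. The idea is to specialize one linear form to a coordinate variable, under which both sides of (\ref{eqn:lem-LT}) factor compatibly. By the proof of Proposition~\ref{prop: linear independence of products}, $\det(\calM)=h\cdot\prod A[i_1,\ldots,i_r]$ with $h\in K$. It therefore suffices to exhibit one specialization of the $a_{ij}$ at which the product of minors is a nonzero polynomial in the remaining variables and $\det(\calM)$ equals $\pm$ that product.

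\textbf{Base cases.} If $d=1$ then $n=r$, $\calM=A$ up to reordering columns, and (\ref{eqn:lem-LT}) is immediate. If $r=1$ then $d=n$, and $\calM$ is the $1\times 1$ matrix $\prod_i a_{i1}$, which is exactly the product of the $1$-minors.

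\textbf{Inductive step.} Let $r\geq 2$ and $d\geq 2$, and specialize $\ell_n=x_r$, that is, $a_{nr}=1$ and $a_{nj}=0$ for $j<r$. Keep $\ell_1,\ldots,\ell_{n-1}$ generic.

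\emph{Rows of $\calM$.} They split into products not containing $\ell_n$ (there are $\binom{n-1}{d}$) and products $x_r\cdot\ell_{i_1}\cdots\ell_{i_{d-1}}$ with $i_j<n$ (there are $\binom{n-1}{d-1}$).

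\emph{Columns of $\calM$.} They split into degree-$d$ monomials free of $x_r$ (there are $\binom{r-2+d}{d}=\binom{n-1}{d}$) and monomials divisible by $x_r$ (there are $\binom{r-1+d-1}{d-1}=\binom{n-1}{d-1}$).

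Rows of the second type have zero coefficients on $x_r$-free monomials, so $\calM$ is block triangular, and $\det(\calM)=\pm\det(\calM_1)\det(\calM_2)$.
\begin{itemize}
\item $\calM_1$ is the matrix of $d$-fold products of the $n-1$ forms $\bar\ell_i=\ell_i|_{x_r=0}$ in $r-1$ variables. Since $(n-1)=(r-1)+d-1$, this is the case $(r-1,d)$.
\item $\calM_2$ is the coefficient matrix of the $(d-1)$-fold products of $\ell_1,\ldots,\ell_{n-1}$ in $r$ variables, after dividing by $x_r$. Since $n-1=r+(d-1)-1$, this is the case $(r,d-1)$.
\end{itemize}
By induction, $\det(\calM_1)=\pm\prod \bar A[i_1,\ldots,i_{r-1}]$ over $(r-1)$-subsets of $[n-1]$, where $\bar A$ is the first $r-1$ columns of the first $n-1$ rows. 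Likewise $\det(\calM_2)=\pm\prod A[i_1,\ldots,i_r]$ over $r$-subsets of $[n-1]$.

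\emph{Product of minors.} On the other side, the $r$-minors not involving row $n$ are unchanged. An $r$-minor involving row $n=(0,\ldots,0,1)$ expands along that row to $\pm\bar A[i_1,\ldots,i_{r-1}]$. So the specialized product of minors equals $\pm\det(\calM_1)\det(\calM_2)=\pm\det(\calM)$. This product is a nonzero polynomial in the remaining generic entries, which forces $h=\pm1$.

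\textbf{Main obstacle.} The delicate point is the bookkeeping in the inductive step. One must check that the block decomposition is square with the right sizes, using the binomial identities above that depend on $n=r+d-1$. One must also check that the minors containing row $n$ correspond bijectively to the $(r-1)$-subsets of $[n-1]$, so that the two products match factor by factor up to sign. The triangularity itself is straightforward once the rows and columns are ordered as described.
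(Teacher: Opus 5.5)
Your proof is correct, and it takes a genuinely different route from the paper.

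\textbf{The paper's route.} The paper fixes a diagonal term order on $K[a_{11},\ldots,a_{nr}]$. Each minor then has leading term $\pm a_{i_1 1}\cdots a_{i_r r}$, so the product of minors has leading monomial with exponent $\binom{i-1}{j-1}\binom{n-i}{r-j}$ on $a_{ij}$. It then orders the rows and columns of $\calM$ lexicographically and asserts that the leading term of $\det(\calM)$ is the product of the diagonal entries. A stars-and-bars count matches those exponents to the same binomials.

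\textbf{Your route.} You specialize $\ell_n=x_r$. This makes $\calM$ block triangular, with square diagonal blocks of shapes $(r-1,d)$ and $(r,d-1)$. At the same time, each minor through row $n$ collapses by cofactor expansion to an $(r-1)$-minor of $\bar A$. On both sides the recursion is Pascal's rule $\binom{n}{d}=\binom{n-1}{d}+\binom{n-1}{d-1}$.

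Both arguments rely on $h\in K$ from the proof of Proposition~\ref{prop: linear independence of products}. You also rightly treat the inductive hypothesis as a polynomial identity in independent indeterminates, so it applies directly to the generic blocks $\calM_1$ and $\calM_2$.

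\textbf{Comparison.} Your argument needs only block-triangular determinants and Laplace expansion along a unit row. It thereby sidesteps the step the paper passes over quickly: showing that, under the chosen orderings, the diagonal permutation contributes a monomial strictly larger than every other permutation's, so no cancellation occurs. The paper's argument, in exchange, is non-inductive and identifies the leading monomial of $\det(\calM)$ explicitly, which is extra combinatorial information your specialization does not give.
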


\begin{proof}
    Consider a diagonal term order on the polynomial ring $K[a_{11},\ldots,a_{nr}]$. Then for any $r$-subset $\{i_1,\ldots,i_r\}\subset[n]$, the leading term (with coefficient) of $A[i_1,\ldots, i_r]$ is
    \[\LT(A[i_1,\ldots, i_r])=\pm a_{i_1 1}\cdots a_{i_r r}.\]
    Therefore, the leading term of the RHS of (\ref{eqn:lem-LT}) is $\pm m$, where $m$ is the monomial with the exponent of the variable $a_{ij}$ given by: 
    \begin{itemize}
        \item if $i\geq j$: $\binom{i-1}{j-1}\binom{n-i}{r-j}$, which is the number of $r$-sequences $(i_1,\ldots,i_r)$ where $1\leq i_1<\cdots<i_r\leq n$ and $i$ appears in the $j^{\text{th}}$ spot;
        \item if $i<j$: $0$.
    \end{itemize}

    Now consider the matrix $\mathcal{M}$. Its columns are indexed by degree $d$ monomials in $R$, so order its columns by lexicographic order. Its rows are indexed by $r$-subsets of $[n]$, so order these by lexicographic order on $[n]^r$. Then the leading term $m'$ of $\det(\mathcal{M})$ is the product of terms appearing in the diagonal of $\mathcal{M}$. Using the ``stars and bars" combinatorial argument, i.e., the correspondence between monomials and $n$-tuples of stars and bars, with $i_1,\ldots,i_r$ corresponding to the indices where stars are placed, one sees that $m'$ is the monomial with the exponent of the variable $a_{ij}$ given by:
    \begin{itemize}
        \item if $i\geq j$: $\binom{i-1}{j-1}\binom{n-i}{r-j}$, which is the number of $n$-tuples of stars and bars where the $i^{\text{th}}$ position is a star appearing after $j-1$ bars and before $r-j$ bars;
        \item if $i<j$: $0$.
    \end{itemize}
    Thus, we see that the leading terms of both sides of (\ref{eqn:lem-LT}) are the same, up to multiplication by $\pm 1$. This implies the desired result.
\end{proof}

\begin{theorem}\label{thm:generic-m^d}
    Let $L=(\ell_1,\ldots,\ell_n)$ be $n$ linear forms in $R$ such that every $r$-subset of the \(\ell_i\) is linearly independent, and let $d=n-r+1$. 
    Then $I_{L,d}=\mathfrak{m}^d$, where $\mathfrak{m}=\langle x_1,\ldots,x_r\rangle$ is the homogeneous maximal ideal. In particular, the set $\{\ell_{i_1}\cdots\ell_{i_d}:1\leq i_1<\cdots<i_d\leq n\}$ forms a $K$-basis of the vector space of degree $d$ homogeneous polynomials in $R$.
\end{theorem}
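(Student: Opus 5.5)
The plan is to reduce everything to a dimension count and then apply Proposition~\ref{prop: linear independence of products}. First, the number of generators matches the target dimension. There are $\binom{n}{d}$ products $\ell_{i_1}\cdots\ell_{i_d}$ with $i_1<\cdots<i_d$. The space $R_d$ of degree $d$ homogeneous polynomials in $r$ variables has dimension
\[
\binom{d+r-1}{r-1}=\binom{n}{r-1}=\binom{n}{d},
\]
using $n=d+r-1$. This is exactly why $\calM$ in Proposition~\ref{prop: linear independence of products} is a square matrix.

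Next, by hypothesis every $r$-subset of the $\ell_i$ is linearly independent. So every maximal minor $A[i_1,\ldots,i_r]$ is nonzero. Proposition~\ref{prop: linear independence of products} then gives $\det(\calM)\neq 0$. Since the rows of $\calM$ are the coefficient vectors of the $d$-fold products with respect to the monomial basis of $R_d$, these $\binom{n}{d}$ products are linearly independent in $R_d$. Because their number equals $\dim_K R_d$, they form a $K$-basis of $R_d$. This is the ``in particular'' statement.

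Finally, I deduce the ideal equality. The ideal $I_{L,d}$ is generated by elements of $R_d$, so $I_{L,d}\subseteq\frakm^d$. Conversely, $\frakm^d$ is generated by the degree $d$ monomials. Each such monomial lies in $R_d$, which is the $K$-span of the generators of $I_{L,d}$, so $\frakm^d\subseteq I_{L,d}$. Hence $I_{L,d}=\frakm^d$.

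The only substantive input is the nonvanishing of $\det(\calM)$, which is already supplied by Proposition~\ref{prop: linear independence of products} and Lemma~\ref{lem:LT-generic}. The remaining step that needs care is the binomial identity $\binom{d+r-1}{r-1}=\binom{n}{d}$, which justifies that $\calM$ is square, so that linear independence upgrades to spanning. The proposition's proof passed to an algebraic closure of $K$; this causes no trouble, since the determinant formula is a polynomial identity in the $a_{ij}$ and so holds over $K$ itself.
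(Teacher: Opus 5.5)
Your proposal is correct and follows the paper's route. The nonvanishing of $\det(\mathcal{M})$ from Proposition~\ref{prop: linear independence of products} gives linear independence of the $\binom{n}{d}$ products. The count $\dim_K R_d=\binom{n}{d}$ then upgrades this to a basis, and $I_{L,d}=\mathfrak{m}^d$ follows because both ideals are generated by $R_d$. Your explicit check of $\binom{d+r-1}{r-1}=\binom{n}{d}$, and your remark that the determinant identity specializes from the algebraic closure back to $K$, only spell out details the paper leaves implicit.
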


\begin{proof}
    By Proposition \ref{prop: linear independence of products}, the set \(\{\ell_{i_1}\ell_{i_2}\cdots\ell_{i_d}:1\leq i_1<\cdots<i_d\leq n\}\) is linearly independent, and has cardinality $\binom{n}{d}$. The vector space of homogeneous degree $d$ polynomials in $R$ also has dimension $\binom{n}{d}$, and so the set of $d$-fold products of the $\ell_i$ forms a basis for this vector space.
    Consequently, the ideal $I_{L,d}$ is the same as the ideal generated by all degree $d$ polynomials of the polynomial ring; this latter ideal is exactly $\mathfrak{m}^d$.
\end{proof}

\begin{remark}\label{rmk:uniform matroid}
The description of the linear forms \(\ell_i\) in Theorem~\ref{thm:generic-m^d} can be reformulated from a matroid-theoretic perspective, with the configuration of the hyperplane arrangement corresponding to the uniform matroid of rank \(r\) on \(n\) elements. Explicitly, setting \(\{\ell_1,\ldots,\ell_n\}\) as the ground set, the collection of independent sets is:
\[\{\{\ell_{i_1},\ldots,\ell_{i_m}\}:\{i_1,\ldots,i_m\}\subset[n], \ m\le r\}.\]
The flats of this matroid are the subsets of the ground set of cardinality at most $r$, together with the ground set itself. This foreshadows results of the following subsection, where we discuss arbitrary hyperplane arrangements.
\end{remark}

\subsection{Arbitrary hyperplane arrangements}\label{subsec:affine}

In this subsection, we give a combinatorial description for a primary decomposition of $I_{L,d}$. This is given in terms of a matroid associated with the hyperplane arrangement.

We first focus on the case where $L$ is a hyperplane arrangement in projective space.
Suppose $L=(\ell_1,\ldots,\ell_n)$ are linear forms in $R$.
The associated matroid $M$ is on the ground set $[n]=\{1,\ldots,n\}$. The independence sets of this matroid are the subsets $\{i_1,\ldots,i_m\}\subset[n]$ such that the linear forms $\ell_{i_1},\ldots,\ell_{i_m}$ are linearly independent in the vector space of linear forms in $R$.

Given such a matroid $M$, we have an associated \emph{rank function} $\mathrm{rk}(-)$ on $M$: for $S\subset[n]$, we define $\mathrm{rk}(S)$ to be the rank of the $K$-vector space spanned by $\{\ell_i:i\in S\}$. For a subset $S\subset[n]$, we define the \emph{closure} $\mathrm{cl}(S)$ of $S$ to be:
\[\mathrm{cl}(S)=\{i\in[n]:\mathrm{rk}(S)=\mathrm{rk}(S\cup\{i\})\}.\]
A \emph{flat} of the matroid $M$ is defined to be a subset $S\subset[n]$ such that $S=\mathrm{cl}(S)$.

The primary decompositions we consider are defined in terms of powers of the prime ideals
\[I_S=\langle\ell_i:i\in S\rangle,\qquad S\subset[n].\]
These primary decompositions are given in \cite[Theorem 2.2]{BTX}. The following is a reformulation of their result.

\begin{theorem}\label{thm:flats-pri-decomp}
    Let $M$ denote the matroid corresponding to the linear forms $L=(\ell_1,\ldots,\ell_n)$. Then we have the following primary decomposition of the ideal $I_{L,d}$:
    \[I_{L,d}=\bigcap_{\substack{S\subset[n],\\ |S|\geq n-d+1, \\ \text{$S$ a flat of $M$}}}(I_S)^{d-n+|S|}.\]
\end{theorem}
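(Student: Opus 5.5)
Write $J$ for the right-hand side. The plan is to prove $I_{L,d}=J$ by checking the two inclusions separately, inducting on $n$ for the hard one.

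\emph{The inclusion $I_{L,d}\subseteq J$.} It suffices to check a single generator $\ell_{i_1}\cdots\ell_{i_d}$ against a single flat $S$ with $|S|\ge n-d+1$. The index set $\{i_1,\ldots,i_d\}$ has at most $n-|S|$ elements outside $S$. Hence at least $d-n+|S|$ of the factors $\ell_{i_j}$ lie in $I_S$, and the generator lies in $(I_S)^{d-n+|S|}$. Each $I_S$ is generated by linear forms, so it is a prime ideal whose powers are primary. Consequently $J$ is an intersection of primary ideals, and once equality holds it is a primary decomposition.

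\emph{The inclusion $J\subseteq I_{L,d}$.} I would induct on $n$, with $d$ and the ambient ring allowed to vary; the base case $d=1$ should be direct.

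\textbf{Reductions.} First extend $K$ to an infinite field. This is harmless because both ideal membership and the matroid are unchanged under field extension. Zero forms can be discarded, since they only shift the indexing, and repeated forms are absorbed by the matroid closure.

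\textbf{Inductive step.} Fix the form $\ell_n$ and change coordinates so that $\ell_n=x_r$. Given $f\in J$, reduce modulo $x_r$ to obtain $\bar f\in\bar R=K[x_1,\ldots,x_{r-1}]$. The images $\bar\ell_1,\ldots,\bar\ell_{n-1}$ form an arrangement whose flats are the images of flats of $M$ containing $n$, i.e.\ the contraction $M/n$.

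Check that $\bar f$ lies in the corresponding intersection for $(n-1,d)$. A flat $\bar S$ of $M/n$ comes from the flat $S\cup\{n\}$ of $M$, and the exponent $d-n+|S\cup\{n\}|=d-(n-1)+|S|$ matches the required one. By induction, $\bar f\in I_{\bar L,d}$. Lifting gives
\[
f=\sum_{|T|=d,\ T\subseteq[n-1]} c_T\,\ell_T + x_r g,
\]
which places the difference in $\langle \ell_n\rangle$.

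\textbf{Main obstacle: controlling $g$.} We need $x_r g\in I_{L,d}$, and $I_{L,d}\supseteq \ell_n I_{L',d-1}$ where $L'=(\ell_1,\ldots,\ell_{n-1})$. So it suffices that $g\in I_{L',d-1}$ after correcting the lift. I would show this through the colon ideal
\[
\bigl(J_{L,d}\cap\langle\ell_n\rangle\bigr):\ell_n\subseteq J_{L',d-1},
\]
which requires checking, for each flat $S'$ of $M\setminus n$ with $|S'|\ge n-d+1$, that $g\in(I_{S'})^{d-n+|S'|}$.

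\emph{Case $n\notin\mathrm{cl}(S')$.} Then $S'$ is a flat of $M$, and $\ell_n$ is a nonzerodivisor modulo the primary ideal $(I_{S'})^k$, because $\ell_n\notin I_{S'}$. Hence $x_r g\in(I_{S'})^{k}$ implies $g\in(I_{S'})^k$.

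\emph{Case $n\in\mathrm{cl}(S')$.} Then $S'\cup\{n\}$ is a flat of $M$ and $f\in(I_{S'})^{d-n+|S'|+1}$. Write $f-\sum c_T\ell_T=x_r g$. Using that symbolic and ordinary powers agree for linear primes, together with the bigraded structure of $\mathrm{gr}$ with respect to $I_{S'}$, we can peel off one factor of $\ell_n\in I_{S'}$ and get $g\in(I_{S'})^{d-n+|S'|}$. This needs $\ell_n$ to be a nonzerodivisor of $I_{S'}$-degree $1$ in the associated graded ring, which holds since $\mathrm{gr}_{I_{S'}}R$ is a polynomial ring.

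The delicate point is that the terms $\sum c_T\ell_T$ also meet each $(I_{S'})^k$ correctly, so the lift must be chosen compatibly. If this bookkeeping becomes unwieldy, the fallback is to compare Hilbert functions. I would compute $\dim (R/I_{L,d})_t$ by the same deletion–contraction exact sequence
\[
0\to R/(I_{L,d}:\ell_n)(-1)\to R/I_{L,d}\to R/(I_{L,d}+\langle\ell_n\rangle)\to 0
\]
and do the same for $J$. The two ideals then have equal Hilbert functions, and combined with $I_{L,d}\subseteq J$ this gives equality.
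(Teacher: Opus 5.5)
Your proof is correct. It takes a different route from the paper only in the sense that the paper offers no argument for this theorem: it imports it as a reformulation of \cite[Theorem 2.2]{BTX}. Your deletion--contraction induction is self-contained.

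\begin{itemize}
\item The contraction step works because the flats of $M/n$ are exactly the flats of $M$ containing $n$, with matching thresholds and exponents.
\item The deletion step rests on $J_{L,d}:\ell_n\subseteq J_{L',d-1}$, which your two cases establish. If $n\notin\mathrm{cl}(S')$, then $S'$ is a flat of $M$ and $\ell_n\notin I_{S'}$. If $n\in\mathrm{cl}(S')$, then $I_{S'\cup\{n\}}=I_{S'}$, and $\ell_n$ has a nonzero initial form of degree one in the domain $\mathrm{gr}_{I_{S'}}R$.
\end{itemize}

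Compared with the citation, this buys three things. It is independent of the literature. It gives the byproduct $I_{L,d}:\ell_n=I_{L',d-1}$. It also gives a second proof of the equality $I_{L,d}=\mathfrak{m}^d$ in Theorem~\ref{thm:generic-m^d}: for the uniform matroid the only flat of size at least $n-d+1=r$ is $[n]$. That route bypasses the determinant computation, though it does not recover the determinant formula itself.

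A few tidy-ups:
\begin{itemize}
\item \emph{The lift.} The ``delicate point'' about choosing the lift compatibly is not an issue. Each $\ell_T$ with $|T|=d$ lies in $J_{L,d}$ by your first inclusion; explicitly, $|T\cap S'|\ge d-n+1+|S'|$ for $T\subseteq[n-1]$. So $\ell_n g=f-\sum c_T\ell_T\in J_{L,d}$ for any lift, and $g\in J_{L,d}:\ell_n$ at once.
\item \emph{The fallback.} The Hilbert-function fallback is therefore unnecessary. It would need the same two inputs (the contraction step and the colon containment), so it is a repackaging rather than an alternative.
\item \emph{The case $d=n$.} Here the contraction step asks for the theorem with $d$ exceeding the number of forms, which is outside the stated range. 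Handle it directly: $J\subseteq I_{\mathrm{cl}(\{n\})}=\langle\ell_n\rangle$ gives $f=\ell_n g$ with no lift needed. Alternatively, adopt the convention that both sides are $0$ in that range.
\item \emph{The field.} Extending to an infinite field is not needed, since the coordinate change making $\ell_n=x_r$ works over any $K$.
\end{itemize}
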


We note that the primary decomposition in Theorem~\ref{thm:flats-pri-decomp} is not necessarily minimal, as the following example illustrates. One may try to use more refined data from the associated matroid to determine the associated primes and minimal primary decomposition of $I_{L,d}$, such as in \cite[Theorem 3.2]{CT}.

\begin{example}
    Let $r=3,d=4,n=6$. Consider the matrix of coefficients for the linear forms given by:
        \[\begin{pmatrix}0 & 1 & 1 & 1 & 0 & 0 \\
        0 & 0 & 0 & 0 & 1 & 0 \\
        0 & 0 & 0 & 0 & 0 & 1\end{pmatrix}.\]
        Then a minimal primary decomposition of the corresponding ideal $I_{L,d}$ is:
        \[I_{L,d}= \langle x\rangle^2\cap\langle y,z\rangle\cap\langle x,y\rangle^3\cap\langle x,z\rangle^3.\]
    Since $S=[n]$ is always a flat, the ideal $\mathfrak{m}^d=\langle x,y,z\rangle^4$ appears in the primary decomposition given in Theorem~\ref{thm:flats-pri-decomp}. However, in this example it is redundant and does not appear in the minimal primary decomposition.
    
    Consider now the following slightly modified matrix of coefficients:
        \[\begin{pmatrix}
            1 & 1 & 1 & 0 & 0 & 0\\
            0 & 0 & 0 & 1 & 1 & 0\\
            0 & 0 & 0 & 0 & 0 & 1
        \end{pmatrix}.\]
        Then the minimal primary decomposition of the corresponding ideal $I_{L,d}$ is:
        \[I_{L,d} = \langle x\rangle\cap\langle x,y\rangle^3\cap\langle x,z\rangle^2\cap\langle y,z\rangle\cap\langle x,y,z\rangle^4,\]
        which agrees with the primary decomposition given in Theorem~\ref{thm:flats-pri-decomp}.
\end{example}

From the homogeneous case, we may now deduce results for primary decompositions of hyperplane arrangements in affine space.

Consider linear polynomials $\ell_1,\ldots,\ell_n\in R$. By homogenizing, we get linear forms $\ell_1',\ldots,\ell_n'\in R'=K[x_1,\ldots,x_{r+1}]$. Let $I_{L,d}\subset R$ denote the ideal of $d$-fold products of the $\ell_i$, and let $J_{L,d}\subset R'$ denote the ideal of $d$-fold products of the $\ell_i'$. Then: 
\[I_{L,d}=(J_{L,d}+\langle x_{r+1}-1\rangle)\cap R,\]
where $J_{L,d}$, via Theorem~\ref{thm:flats-pri-decomp}, has a primary decomposition in terms of powers of ideals $J_S=\langle \ell_i':i\in S\rangle\subset R'$.
This enables us to obtain the following primary decomposition of $I_{L,d}$ in $R$:

\begin{theorem}\label{thm:affine-pri-decomp}
    Let $L=(\ell_1,\ldots,\ell_n)$ with $\ell_i\in R$ be linear polynomials. Then we have the following primary decomposition of $I_{L,d}$:
    \[I_{L,d}=\bigcap (I_S)^{d-n+|S|},\]
    where the intersection is over subsets $S\subset[n]$ such that: 
    \begin{enumerate}
        \item $S$ indexes a flat of the hyperplane arrangement associated with the homogenizations of the $\ell_i$;
        \item the hyperplane at infinity is not in the span of this associated flat, i.e., $x_{r+1}\notin J_S$; and
        \item $|S|\geq n-d+1$.
    \end{enumerate}
\end{theorem}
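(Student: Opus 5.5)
Taking the paper's reduction $I_{L,d}=(J_{L,d}+\langle x_{r+1}-1\rangle)\cap R$ as given, I would prove Theorem~\ref{thm:affine-pri-decomp} by dehomogenizing the primary decomposition of $J_{L,d}$ from Theorem~\ref{thm:flats-pri-decomp}. Write $\phi:R'\to R$ for the dehomogenization map $x_{r+1}\mapsto 1$. Then $\phi(J_{L,d})=I_{L,d}$, and for every $S\subset[n]$ we have $\phi(J_S)=I_S$ and hence $\phi(J_S^k)=I_S^k$. Theorem~\ref{thm:flats-pri-decomp} gives
\[J_{L,d}=\bigcap_{S}J_S^{d-n+|S|},\]
where $S$ runs over flats of the homogenized matroid with $|S|\geq n-d+1$. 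So the proof reduces to two steps: show that dehomogenization commutes with this intersection, and decide which components survive.

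For the first step I would use the standard fact that for homogeneous ideals $J_1,\ldots,J_k\subset R'$ one has $\phi(J_1\cap\cdots\cap J_k)=\phi(J_1)\cap\cdots\cap\phi(J_k)$. The inclusion $\subseteq$ is clear. For $\supseteq$, take $g$ lying in each $\phi(J_i)$. For each $i$ there is a homogeneous $G_i\in J_i$ with $\phi(G_i)=g$, and each $G_i$ equals $x_{r+1}^{a_i}g^h$, where $g^h$ is the homogenization of $g$. Put $a=\max_i a_i$. Then $x_{r+1}^{a}g^h$ lies in every $J_i$, and it dehomogenizes to $g$.

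For the second step, suppose $x_{r+1}\in J_S$. Then $1=\phi(x_{r+1})\in I_S$, so $I_S^k=R$ and that component drops out of the intersection. Now suppose $x_{r+1}\notin J_S$. The ideal $I_S$ is generated by linear polynomials, and $\phi$ sends $J_S$ onto $I_S$. The ideal $I_S$ is proper: otherwise a linear combination $\sum_{i\in S} c_i\ell_i'$ would equal $x_{r+1}$, contradicting $x_{r+1}\notin J_S$. After an affine change of coordinates, $I_S$ is generated by a subset of the variables. So $I_S$ is prime, and its powers are primary, since powers of an ideal generated by variables are primary. This yields exactly the decomposition $\bigcap I_S^{d-n+|S|}$ over the index set described in conditions (1)--(3).

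The step I expect to be most delicate is the commutation of $\phi$ with intersection, together with justifying $\phi(J_S^k)=I_S^k$. The latter holds because $\phi$ is a surjective ring map and $J_S$ is generated by the $\ell_i'$, whose images are the $\ell_i$.

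Concretely, the argument shows $I_{L,d}=\phi(J_{L,d})$ and rewrites this using the paper's identity. Any element $g\in(J_{L,d}+\langle x_{r+1}-1\rangle)\cap R$ satisfies $g=\phi(g)\in\phi(J_{L,d})$. Conversely, any element $\phi(G)$ with $G\in J_{L,d}$ satisfies $\phi(G)\equiv G$ modulo $\langle x_{r+1}-1\rangle$, so $\phi(G)\in(J_{L,d}+\langle x_{r+1}-1\rangle)\cap R$.
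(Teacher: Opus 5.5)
Your proposal is correct and follows the same route as the paper: dehomogenize the decomposition of Theorem~\ref{thm:flats-pri-decomp}, discard the components with $x_{r+1}\in J_S$ (they become $R$), and identify the remaining ones with $(I_S)^{d-n+|S|}$. You are more careful than the paper in two places: it passes $\langle x_{r+1}-1\rangle$ through the intersection without comment, and your homogeneity argument (taking $x_{r+1}^{\max a_i}g^h$) is exactly the justification that step needs; it also leaves implicit your check that the surviving $I_S$ are proper primes whose powers are primary.
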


\begin{proof}
    We retain the notation introduced in the paragraph preceding the statement of the theorem. By Theorem~\ref{thm:flats-pri-decomp}, we have that:
    \begin{align*}
        I_{L,d} &= (J_{L,d}+\langle x_{r+1}-1\rangle)\cap R\\
        &=\left(\left(\bigcap_{\substack{\text{$S$ a flat for $\ell'_i$},\\|S|\geq n-d+1}} (J_S)^{d-n+|S|}\right)+\langle x_{r+1}-1\rangle\right)\cap R\\
        &=\bigcap_{\substack{\text{$S$ a flat for $\ell_i'$},\\|S|\geq n-d+1}}\left((J_S)^{d-n+|S|}+\langle x_{r+1}-1\rangle\right)\cap R.
    \end{align*}
    If $J_S$ is supported on the hyperplane at infinity, i.e., $x_{r+1}\in J_S$, then: 
    \[\left((J_S)^{d-n+|S|}+\langle x_{r+1}-1\rangle\right)\cap R=R,\]
    and so the term is redundant in the intersection. Otherwise:
    \[\left((J_S)^{d-n+|S|}+\langle x_{r+1}-1\rangle\right)\cap R=(I_S)^{d-n+|S|},\]
    which gives the desired equality.
\end{proof}

\begin{example}
    Consider the lines defined by:
    \[\ell_1=x,\quad\ell_2= y, \quad \ell_3=x-1,\quad \ell_4=y-1,\quad \ell_5=x-y\]
    in the polynomial ring $K[x,y]$. Their homogenizations in $K[x,y,z]$ are:
    \[\ell_1'=\ell_1,\quad\ell_2'=\ell_2,\quad\ell_3'=x-z,\quad\ell_4'=y-z,\quad\ell_5'=x-y.\]
    A primary decomposition for the ideal $I_{L,d}$ of $(d=3)$-fold products of the $\ell_i$ is given by:
    \[I_{L,d}=\langle x,y\rangle\cap\langle x-1,y-1\rangle.\]
    This can be obtained from a primary decomposition for the ideal $J_{L,d}$ of $3$-fold products of the $\ell_i'$, removing ideals supported on the hyperplane $z=0$, and then setting $z=1$:
    \[J_{L,d}=\langle x,y\rangle\cap\langle x-z,y-z\rangle\cap\langle x,y,z\rangle^3.\]
\end{example}

\section{The braid arrangement}\label{sec:braid}

We apply the primary decomposition in Theorem~\ref{thm:flats-pri-decomp} to \emph{braid arrangements}, which form an important family of hyperplane arrangements arising across algebra, geometry, topology, and combinatorics (see, e.g., \cite{Fei}). Braid arrangements also appear in physics via denominators of \emph{broken Parke--Taylor functions} \cite{Fro} and \emph{Parke--Taylor forms} \cite{BEPV}.

\begin{definition}
The \textit{braid arrangement} $\mathcal{B}_r$ consists of the following set of $\binom{r}{2}$ hyperplanes in $R=K[x_1,\ldots,x_r]$:
\[\mathcal{B}_r=\{\calV(x_i-x_j): \ i,j\in[r], \ i<j\}.\] 
\end{definition}

The flats of the braid arrangement $\mathcal{B}_r$ are in bijection with partitions of the set $[r]$; if $B_1\sqcup\cdots\sqcup B_m=[r]$, then a flat of the arrangement is defined by the equations:
\[\{x_i-x_j:\text{$i<j$ for which $i,j\in B_k$ are in the same subset of the partition}\}.\]
For each flat of $\mathcal{B}_r$, one may therefore associate a partition $\lambda\vdash r$, i.e., a tuple $\lambda=(\lambda_1,\ldots,\lambda_r)$ where $\lambda_1+\cdots+\lambda_r=r$ and $\lambda_1\geq\cdots\lambda_r\geq 0$. Then, for a flat associated with the partition $\lambda$, the number of hyperplanes contained in the flat is given by:
\[\binom{\lambda_1}{2}+\binom{\lambda_2}{2}+\cdots+\binom{\lambda_r}{2}.\]
Table~\ref{table: partitions of braid arrangement} summarizes the flats and their associated partitions for $\mathcal{B}_5$.

\renewcommand{\arraystretch}{1.4}

\begin{table}[h!]

\begin{center}
    \small\begin{tabular}{l|l|l|l}
        Partition & Number of flats & Number of hyperplanes & Example flat  \\
        \hline
  $(5)$ & $\binom{5}{5}=1$ & $\binom{5}{2}=10$ & $12,13,14,15,23,24,25,34,35,45$ \\[5pt]
  $(4,1)$ & $\binom{5}{4}=5$ & $\binom{4}{2}=6$ & $12,13,14,23,24,34$\\[5pt]
  $(3,2)$ & $\binom{5}{3}=10$ & $\binom{3}{2}+\binom{2}{2}=4$ & $12,13,23,45$\\[5pt]
  $(3,1,1)$ & $\binom{5}{3}=10$ & $\binom{3}{2}=3$ & $12,13,23$\\[5pt]
  $(2,2,1)$ & $\binom{5}{2}+\binom{3}{2}=13$ & $\binom{2}{2}+\binom{2}{2}=2$ & $12,34$\\[5pt]
  $(2,1,1,1)$ & $\binom{5}{2}=10$ & $\binom{2}{2}=1$ & $12$\\[4pt]
  $(1,1,1,1,1)$ & 1 & 0 & --
    \end{tabular}
\end{center}
\vspace{5mm}
\caption{The first column lists all the partitions of $5$, the second states the number of flats associated with a given partition, the third gives the number of hyperplanes for each such flat, and the fourth gives an example of such a flat. We use $ij$ to denote the polynomial $x_i-x_j$ for brevity.}
\label{table: partitions of braid arrangement}
\end{table}

\begin{example}\label{eg:braid5}
    Let $r=5$ and $d=8$, and consider the braid arrangement $\mathcal{B}_5$. There are $n=\binom{5}{2}=10$ linear forms in our arrangement $L$. Then Theorem~\ref{thm:flats-pri-decomp} gives a primary decomposition of $I_{L,d}$ as an intersection of the following kinds of ideals:
    \begin{itemize}
        \item the homogeneous maximal ideal $\frakm$ to the power of $d=8$, corresponding to the single flat associated with $(5)$;
        \item five ideals, each to the power of $4$, corresponding to flats associated with $(4,1)$;
        \item ten ideals, each to the power of $2$, corresponding to flats associated with $(3,2)$;
        \item ten ideals, each to the power of $1$, corresponding to flats associated with $(3,1,1)$.
    \end{itemize}
\end{example}

One may notice from Table~\ref{table: partitions of braid arrangement} and Example~\ref{eg:braid5} that if $\lambda\succeq\mu$ under the dominance order on partitions (recall that $\lambda\succeq\mu$ under the \emph{dominance order} if and only if $\lambda_1+\cdots+\lambda_i\geq\mu_1+\cdots+\mu_i$ for all $i\in[r]$), then the size of a flat associated with $\lambda$ is larger than that of $\mu$. The following result shows that this holds more generally, thus showing that the set of partitions associated with flats appearing in the primary decomposition of Theorem~\ref{thm:flats-pri-decomp} is a filter in the poset of partitions.

\begin{proposition}\label{prop:braid}
    Suppose $\lambda,\mu\vdash r$ be partitions, with $\lambda\succeq\mu$ under the dominance order. Then $|S_\lambda|\geq |S_\mu|$, where $S_\lambda$ (resp. $S_\mu$) denotes any flat associated with $\lambda$ (resp. $\mu$). In particular, if flats corresponding to the partition $\mu$ appear in the primary decomposition of Theorem~\ref{thm:flats-pri-decomp}, then so do flats corresponding to the partition $\lambda$. 
\end{proposition}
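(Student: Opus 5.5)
We need to show that $f(\lambda)=\sum_i\binom{\lambda_i}{2}$ is monotone with respect to dominance order. Since $|S_\lambda|=\sum_i\binom{\lambda_i}{2}$ depends only on $\lambda$, the statement reduces to a purely combinatorial inequality about partitions. The second claim then follows at once from Theorem~\ref{thm:flats-pri-decomp}: a flat $S$ appears in the decomposition precisely when $|S|\geq n-d+1$. So if $|S_\mu|\geq n-d+1$, then also $|S_\lambda|\geq|S_\mu|\geq n-d+1$, and hence $S_\lambda$ appears as well.

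For the inequality I would use the standard fact that the dominance order on partitions of $r$ is generated by elementary moves. If $\lambda\succeq\mu$, then $\lambda$ can be obtained from $\mu$ by a finite sequence of steps. Each step replaces two parts $\mu_i\geq\mu_j$ (with $i<j$) by $\mu_i+1$ and $\mu_j-1$, then reorders so the result is again a partition. This is the classical result of Brylawski, or equivalently Muirhead's lemma on Robin Hood transfers. I would cite it, or sketch it: take the first index $k$ where the partial sums of $\lambda$ strictly exceed those of $\mu$, and move one box of $\mu$ from a suitable later row up to row $k$, choosing the row so that dominance by $\lambda$ is preserved.

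It then suffices to check that a single move does not decrease $f$. Replacing $a\geq b\geq 1$ by $a+1$ and $b-1$ changes $f$ by
\[\binom{a+1}{2}-\binom{a}{2}+\binom{b-1}{2}-\binom{b}{2}=a-(b-1)=a-b+1>0,\]
so $f$ strictly increases. Reordering parts leaves $f$ unchanged. Chaining the moves gives $f(\lambda)\geq f(\mu)$.

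Alternatively, to avoid the decomposition lemma, I could use convexity directly. Write $f(\lambda)=\frac12\left(\sum_i\lambda_i^2-r\right)$. Since $x\mapsto x^2$ is convex, Karamata's majorization inequality shows that $\sum_i\lambda_i^2$ is Schur-convex, which gives the result immediately. The only real obstacle is justifying the chain of elementary moves, or invoking Karamata correctly. Both are standard, so I would cite one of them rather than reprove it.
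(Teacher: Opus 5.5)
Your proof is correct and follows essentially the paper's route: the paper also reduces to cover relations via Brylawski's characterization (a cover moves one box from row $j$ to a row $i<j$, with $\lambda_i-\lambda_j\geq 2$). It then computes the change $\lambda_i-\lambda_j-1\geq 1$, which is exactly your $a-b+1$ with $a=\mu_i$, $b=\mu_j$. Your alternative via Schur-convexity of $\sum_i\lambda_i^2$ (Karamata) is also valid and avoids the decomposition lemma altogether.
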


\begin{proof}
    Let $\lambda=(\lambda_1,\ldots,\lambda_r)$ and $\mu=(\mu_1,\ldots,\mu_r)$, with $\lambda_i,\mu_i\geq 0$ for all $i$. It suffices to assume that $\lambda\succeq\mu$ is a cover relation. By \cite[Proposition 2.3]{Bry}, this implies that there exists $i,j\in [r]$ with $i<j$ such that $\lambda_i=\mu_i+1$, $\lambda_j=\mu_j-1$, $\lambda_i-\lambda_j\geq 2$, and $\lambda_k=\mu_k$ for all $k\neq i,j$. Then $|S_\lambda|\geq |S_\mu|$ if and only if:
    \[\binom{\lambda_i}{2}+\binom{\lambda_j}{2}\geq\binom{\mu_i}{2}+\binom{\mu_j}{2}=\binom{\lambda_i-1}{2}+\binom{\lambda_j+1}{2}.\]
Consequently:
    \begin{align*}
        \left(\binom{\lambda_i}{2}+\binom{\lambda_j}{2}\right)-\left(\binom{\lambda_i-1}{2}+\binom{\lambda_j+1}{2}\right)&=\lambda_i-\lambda_j-1\geq 1,
    \end{align*}
    and so the desired conclusion holds.
\end{proof}

\section{Adjoint polynomials}\label{sec: polytope adjoint polynomial}

A rapidly developing field at the intersection of physics, combinatorics, and algebraic geometry is the study of positive geometries and canonical forms \cite{Lam,RST}. Central mathematical objects in this study are projective polytopes and their \emph{adjoint polynomials}. In this section, we highlight how the primary decomposition in Theorems~\ref{thm:flats-pri-decomp} and \ref{thm:affine-pri-decomp} can be used to easily deduce vanishing properties of adjoint polynomials. We define this polynomial, and refer the reader to the paper \cite{Gae} for further introduction to the relevant terminology.

\begin{definition}\label{def: adjoint definition}
Let $P$ be a polytope in $\mathbb{R}^{r-1}$. Let $C\subset\mathbb{R}^r$ be the cone over $P$ with rays $V(C)$ corresponding to vertices of $P$. 
 For each simplicial cone $S\in T$ of a triangulation $T$ of $C$, denote by $a_S$ the volume of the parallelepiped whose edges are defined by the rays $V(S)$ of $S$. The \emph{adjoint polynomial} of $C$ is defined as:
\begin{equation*}
\mathrm{adj}_C(x)=\sum_{S\in T}\left(a_S\prod_{v\in V(C)\backslash V(S)}\ell_v(x)\right).
\end{equation*}
Here, $\ell_v(x)=v\cdot x$, where $\cdot$ denotes scalar product.
\end{definition}

The form of $\mathrm{adj}_C$ is independent of the choice of triangulation of $C$, and $\mathrm{adj}_C$ is homogeneous and of degree $|V(C)|-\dim(C)$ \cite{Gae, War}.
Furthermore, Kohn--Ranestad showed that the adjoint polynomial must vanish on the residual arrangement of the 
hyperplane arrangement defining the facets of the dual cone $C^\vee$; the adjoint polynomial is also uniquely characterized by certain vanishing conditions on this residual arrangement \cite{BW,KR}.

The above definition of adjoint polynomial makes it clear that adjoint polynomials are in ideals of the form $I_{L,d}$, where in this case the hyperplanes defining the ideal are defined by the $\ell_{v}$ from Definition~\ref{def: adjoint definition}. 
Consequently, using the primary decomposition of Theorem~\ref{thm:flats-pri-decomp}, one can determine certain properties about where adjoint polynomials vanish, without needing to compute any of the $a_S$ coefficients or the residual arrangement of the dual cone. 
Theorem~\ref{thm:flats-pri-decomp} implies the following vanishing behavior of adjoint polynomials.

\begin{proposition}
    Let $P\subset\mathbb{R}^{r-1}$ be a polytope, and let $C$ be the cone over $P$. Then the adjoint polynomial $\mathrm{adj}_C$ vanishes to order at least $d-n+|S|$ on all flats $S$ of size at least $n-d+1$ of the hyperplane arrangement given by the homogenized facet hyperplanes of $P^\vee$.
\end{proposition}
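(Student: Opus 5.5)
The plan is to show that $\mathrm{adj}_C$ lies in an ideal of the form $I_{L,d}$ and then read off the vanishing orders from Theorem~\ref{thm:flats-pri-decomp}.

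Let $n=|V(C)|$ and $r=\dim(C)$, and set $L=(\ell_v)_{v\in V(C)}$. These linear forms define the hyperplane arrangement in the statement: a vertex $v$ of $P$ corresponds to a facet of $P^\vee$, and its homogenized facet hyperplane is $\calV(\ell_v)$. Put $d=n-r$; this equals $\deg(\mathrm{adj}_C)$, as recalled after Definition~\ref{def: adjoint definition}. We may assume $d\geq 1$. If $d=0$, i.e.\ $C$ is itself simplicial, the condition $|S|\geq n+1$ is never met, so the claim is vacuous.

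First, fix a triangulation $T$ of $C$. Each simplicial cone $S'\in T$ has exactly $r$ rays, so each summand
\[
a_{S'}\prod_{v\in V(C)\setminus V(S')}\ell_v
\]
is a constant times a product of exactly $n-r=d$ distinct forms from $L$. Hence every summand is a generator of $I_{L,d}$, and therefore $\mathrm{adj}_C\in I_{L,d}$.

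Next, apply Theorem~\ref{thm:flats-pri-decomp}. It gives
\[
\mathrm{adj}_C\in (I_S)^{d-n+|S|}
\]
for every flat $S$ of the matroid of $L$ with $|S|\geq n-d+1$.

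Finally, translate this into the vanishing statement. The ideal $I_S=\langle \ell_i : i\in S\rangle$ is the ideal of the linear subspace $\calV(I_S)$, which is the intersection of the hyperplanes indexed by the flat. A polynomial vanishes to order at least $k$ along a linear subspace exactly when it lies in the $k$-th power of that subspace's ideal; for linear ideals, ordinary and symbolic powers coincide. This gives the required order $d-n+|S|$.

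The only point that needs care is the identification of the objects in the statement: the arrangement of $P^\vee$ must be the one cut out by the $\ell_v$, and $d$ must be chosen as $n-\dim C$. Everything after that is a direct citation of Theorem~\ref{thm:flats-pri-decomp}.
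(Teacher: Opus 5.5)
Your proof is correct and follows the paper's argument: the paper likewise notes that the triangulation formula in Definition~\ref{def: adjoint definition} writes $\mathrm{adj}_C$ as a real linear combination of $(n-\dim C)$-fold products of the $\ell_v$, so $\mathrm{adj}_C\in I_{L,d}$, and it then reads off the vanishing orders from the primary decomposition in Theorem~\ref{thm:flats-pri-decomp}. Your explicit choice $d=n-\dim C$, together with the treatment of the degenerate case $d=0$, makes precise what the paper's statement leaves implicit.
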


This vanishing condition is different from that of \cite[Theorem 1.5]{KR}, which guarantees vanishing of $\mathrm{adj}_C$ on the residual arrangement of the dual polytope.

\begin{figure}[h]
\centering
\begin{tikzpicture}[scale=0.2ex]
\draw[black] (0,0) -- (0,1);
\draw[black] (0,0) -- (1,-1);
\draw[black] (0,0) -- (1,1);
\draw[black] (0,1) -- (1,2);
\draw[black] (0,1) -- (1,-1);
\draw[black] (1,2) -- (1,1);
\draw[black] (1,2) -- (4,2);
\draw[black] (1,1) -- (4,1);
\draw[black] (4,2) -- (5,1);
\draw[black] (4,2) -- (4,1);
\draw[black] (4,1) -- (5,0);
\draw[black] (5,1) -- (5,0);
\draw[black] (5,0) -- (4,-1);
\draw[black] (5,1) -- (4,-1);
\draw[black] (1,-1) -- (4,-1);
\filldraw[black] (0,0) circle (2pt) node[scale=1.2, xshift=0, yshift=0]{};
\filldraw[black] (0,1) circle (2pt) node[scale=1.2, xshift=0, yshift=0]{};
\filldraw[black] (1,-1) circle (2pt) node[scale=1.2, xshift=0, yshift=0]{};
\filldraw[black] (1,2) circle (2pt) node[scale=1.2, xshift=0, yshift=0]{};
\filldraw[black] (1,1) circle (2pt) node[scale=1.2, xshift=0, yshift=0]{};
\filldraw[black] (4,1) circle (2pt) node[scale=1.2, xshift=0, yshift=0]{};
\filldraw[black] (4,2) circle (2pt) node[scale=1.2, xshift=0, yshift=0]{};
\filldraw[black] (5,1) circle (2pt) node[scale=1.2, xshift=0, yshift=0]{};
\filldraw[black] (5,0) circle (2pt) node[scale=1.2, xshift=0, yshift=0]{};
\filldraw[black] (4,-1) circle (2pt) node[scale=1.2, xshift=0, yshift=0]{};
\end{tikzpicture}
\caption{Polytope used to define the adjoint polynomial in Example~\ref{ex: polytope adjoint polynomial}.}
\label{fig: polytope}
\end{figure}
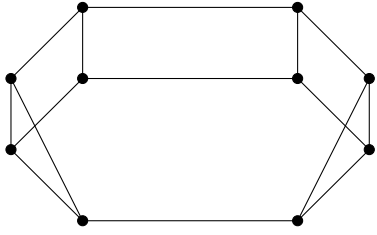

\begin{example}\label{ex: polytope adjoint polynomial}
Consider the polytope $P$ shown in Figure~\ref{fig: polytope}. Its homogeneous facet hyperplanes can be given by:
\begin{equation*}
\begin{aligned}
   \ell_1&=x_2-x_4,\\
   \ell_3&=-2x_1+x_2+2x_4,\\
   \ell_5&=-2x_1+x_2-2x_4,\\
   \ell_7&=x_2-2x_3+x_4.
\end{aligned}
\quad\quad
\begin{aligned}
    \ell_2&=2x_1+x_2-2x_4,\\
    \ell_4&=2x_1+x_2+2x_4,\\
    \ell_6&=x_3
\end{aligned}
\end{equation*}
Each of these can be written in the form $v_i\cdot x$ where the $v_i$ are the set of vertex rays of a pointed polyhedral cone $C$ (the dual cone associated with $P$). The adjoint polynomial of this cone is thus of the form:
\begin{equation*}
    \text{adj}_C(x)=\sum_{1\leq i_1<i_2<i_3\leq 7}a_{i_1 i_2i_3}\ell_{i_1}\ell_{i_2}\ell_{i_3},\qquad a_{i_1i_2i_3}\in\mathbb{R}.
\end{equation*}
This polynomial is an element of the ideal:
\[I_{L,3}=\langle\ell_{i_1}\ell_{i_2}\ell_{i_3}:1\le i_1<i_2<i_3\le7\rangle,\]
which, using Theorem~\ref{thm:flats-pri-decomp}, can be expressed as the primary decomposition:
\begin{equation*}
    I_{L,3}=\bigcap_{\substack{S\subset[7],\,|S|\ge5,\\\text{$S$ a flat of M}}}(I_S)^{|S|-4}=\langle\ell_i:i\in[5]\rangle\cap\langle\ell_i:i\in[7]\rangle^3.
\end{equation*}
Here, we have used that the matroid defined by the facet hyperplanes only has two flats containing more than four elements, those being $\{\ell_i:i\in[5]\}$ and $\{\ell_i:i\in[7]\}$. This result tells us that the adjoint polynomial associated with the polytope $P$ vanishes with order at least one where the five linear forms $\ell_1,\ldots,\ell_5$ all vanish.
\end{example}

\begin{remark}
It was shown in \cite{Gae} that canonical forms of polytopes $P$ can be expressed as rational functions, the numerator being the adjoint polynomial associated with the polytope $P^\vee$ dual to $P$, and the denominator being a product of facet hyperplanes of $P^\vee$. This means that canonical forms are exactly a type of rational function with a PFD of degree $d$.
\end{remark}

\section{Partial fraction decompositions and algorithmic aspects}\label{sec: PFDs}

We now return to the problem of existence and computation of PFDs. Let $f/(\ell_1\cdots\ell_n)$ be a rational function, where $f,\ell_1,\ldots,\ell_n\in R=K[x_1,\ldots,x_r]$ are nonzero polynomials such that $\deg(\ell_i)=1$. Recall that we are interested in when this function has a PFD of degree $d$, meaning that it can be expressed as:
\begin{equation}\label{eqn:pfd secalgo}
    \frac{f}{\ell_1\cdots\ell_n}=\sum_{\{j_1,\ldots,j_{n-d}\}\subset[n]} \frac{h_{j_1\cdots j_{n-d}}}{\ell_{j_1}\cdots\ell_{j_{n-d}}},\qquad \deg(h_{j_1\cdots j_{n-d}})\leq \deg(f)-d.
\end{equation}
Typically, the least common multiple of all of the terms in the PFD expression is exactly $\ell_1\cdots\ell_n$. By putting all of the terms of the PFD under this common denominator, we see that such a PFD exists exactly when the polynomial $f$ can be expressed as an $R$-linear combination of $d$-fold products of the $\ell_i$'s. In other words, a PFD of degree $d$ exists for $f/(\ell_1\cdots\ell_n)$ if and only if $f$ lies in the ideal $I_{L,d}$ presented in \S\ref{sec: primary decompositions}. We can thus interpret the primary decompositions of $I_{L,d}$ as vanishing criteria for existence of PFDs.
We first consider the case of hyperplane arrangements in linearly general position.

\begin{theorem}\label{thm: generic PFD}
Let $\ell_1 ,\ldots, \ell_n$ be $n$ linear forms in \(R=K[x_1,\ldots,x_r]\) such that any $r$-subset of the \(\ell_i\) are linearly independent. Set \(d=n-r+1\), and let \(f\in R\) be any homogeneous degree \(d\) polynomial. Then the rational function $f/(\ell_1\cdots\ell_n)$
has a unique PFD of degree $d$.
\end{theorem}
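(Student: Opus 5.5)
The plan is to reduce both existence and uniqueness to Theorem~\ref{thm:generic-m^d}. Since $f$ is homogeneous of degree $d$, a PFD of degree $d$ as in (\ref{eqn:pfd secalgo}) has numerators $h_{j_1\cdots j_{n-d}}$ with $\deg(h_{j_1\cdots j_{n-d}})\leq \deg(f)-d=0$. So every numerator is a constant $c_{j_1\cdots j_{n-d}}\in K$. Clearing denominators by multiplying through by $\ell_1\cdots\ell_n$ (all $\ell_i$ are nonzero, since any $r$-subset is independent), each term becomes $c_{j_1\cdots j_{n-d}}$ times the complementary $d$-fold product $\ell_{i_1}\cdots\ell_{i_d}$, where $\{i_1,\ldots,i_d\}=[n]\setminus\{j_1,\ldots,j_{n-d}\}$. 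Hence a PFD of degree $d$ of $f/(\ell_1\cdots\ell_n)$ is the same thing as an expression
\[
f=\sum_{1\leq i_1<\cdots<i_d\leq n} c_{i_1\cdots i_d}\,\ell_{i_1}\cdots\ell_{i_d},\qquad c_{i_1\cdots i_d}\in K,
\]
and the correspondence between the two is bijective, because complementation is a bijection between $(n-d)$-subsets and $d$-subsets of $[n]$.

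By Theorem~\ref{thm:generic-m^d}, under the hypothesis that every $r$-subset of the $\ell_i$ is linearly independent and $d=n-r+1$, the $d$-fold products $\ell_{i_1}\cdots\ell_{i_d}$ form a $K$-basis of the space of degree $d$ homogeneous polynomials. Since $f$ lies in this space, the expansion above exists, which gives existence of the PFD. The coefficients are uniquely determined by linear independence, which gives uniqueness.

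The only point needing care is the meaning of ``unique'': it should be read as uniqueness of the constant numerators indexed by the $(n-d)$-subsets in the form (\ref{eqn:pfd secalgo}). One must not, for example, allow terms with the same denominator to be regrouped, or terms with a zero coefficient to be treated as distinct. The degree bound is what forces the numerators to be constants; without it, numerators of positive degree could absorb factors and the representation would not be unique. I do not expect any step to be a real obstacle once Theorem~\ref{thm:generic-m^d} is available; the argument is essentially a translation of that result.
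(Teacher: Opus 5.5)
Your proposal is correct and is essentially the paper's proof: both expand $f$ uniquely in the $K$-basis of $d$-fold products given by Theorem~\ref{thm:generic-m^d}, then pass to the complementary $(n-d)$-subsets to read off constant numerators. Your added observation makes explicit the uniqueness direction that the paper leaves implicit: the bound $\deg(h_{j_1\cdots j_{n-d}})\leq \deg(f)-d=0$ forces every numerator of any degree-$d$ PFD to be constant, so such PFDs correspond bijectively to these expansions.
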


\begin{proof}
    By Theorem~\ref{thm:generic-m^d}, the polynomial \(f\) can be written uniquely as a $K$-linear combination of the elements of \(\{\ell_{i_1}\ell_{i_2}\cdots\ell_{i_d}:1\leq i_1<\cdots<i_d\leq n\}\):
\[f=\sum_{\substack{\{i_1,\ldots, i_d\}\subset[n], \\ i_1<\cdots<i_d}}a_{i_1\ldots i_d}\ell_{i_1}\cdots\ell_{i_d},\qquad a_{i_1\cdots i_d}\in K.\]
Thus, we can write the rational function as:
\[\frac{f}{\ell_1\ell_2\cdots\ell_n}=\sum_{\substack{\{i_1,\ldots, i_{d}\}\subset[n], \\ i_1<\cdots<i_{d}}}\frac{a_{i_1\cdots i_d}}{(\ell_1\ell_2\cdots\ell_n)/(\ell_{i_1}\cdots\ell_{i_d})}=\sum_{\substack{\{j_1,\ldots, j_{n-d}\}\subset[n], \\ j_1<\cdots<j_{n-d}}}\frac{b_{j_1\cdots j_{n-d}}}{\ell_{j_1}\cdots\ell_{j_{n-d}}},\]
where we set \(b_{j_1\cdots j_{n-d}}\) to be equal to \(a_{i_1\cdots i_d}\) with \(\{i_1,\ldots, i_d\}=[n]\setminus\{j_1,\ldots,j_{n-d}\}\).
\end{proof}

We now turn to the case of arbitrary hyperplane arrangements, both in the projective and affine settings. The following results on PFDs follow from Theorems~\ref{thm:flats-pri-decomp} and \ref{thm:affine-pri-decomp}.

\begin{theorem}\label{thm: PFD iff proj}
Let $\ell_1,\ldots,\ell_n$ be $n$ nonzero linear forms in $R$, let $d\leq n$, and let $f\in R$ be a homogeneous polynomial with $\deg(f)\geq d$. Then $f/(\ell_1\cdots\ell_n)$ has a PFD of degree $d$ if and only if $f$ vanishes to order at least $d-n+|S|$ on all flats $S$ of size at least $n-d+1$ of the hyperplane arrangement associated with the linear forms. 
\end{theorem}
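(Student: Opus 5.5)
The proof reduces the existence of a PFD to ideal membership and then applies the primary decomposition of Theorem~\ref{thm:flats-pri-decomp}. The plan has three steps: first show that a PFD of degree $d$ exists if and only if $f\in I_{L,d}$; second, read off membership in $I_{L,d}$ from the primary components; third, translate membership in $(I_S)^{k}$ into vanishing to order $k$ on the flat $S$.

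\textbf{Step 1 (PFD $\Leftrightarrow$ $f\in I_{L,d}$).} Suppose $f/(\ell_1\cdots\ell_n)$ has a PFD as in (\ref{eqn:pfd secalgo}). Multiply through by $\ell_1\cdots\ell_n$. Each term $h_{j_1\cdots j_{n-d}}/(\ell_{j_1}\cdots\ell_{j_{n-d}})$ becomes $h_{j_1\cdots j_{n-d}}$ times the product of the $d$ complementary forms $\ell_{i_1}\cdots\ell_{i_d}$, where $\{i_1,\ldots,i_d\}=[n]\setminus\{j_1,\ldots,j_{n-d}\}$. Hence $f\in I_{L,d}$.

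Conversely, suppose $f\in I_{L,d}$, and write $f=\sum g_{i_1\cdots i_d}\,\ell_{i_1}\cdots\ell_{i_d}$. We want each coefficient to have degree at most $\deg(f)-d$. Since $f$ and all the generators are homogeneous, we may replace each $g_{i_1\cdots i_d}$ by its homogeneous component of degree $\deg(f)-d$; comparing components of degree $\deg(f)$ on both sides shows the identity still holds. Dividing by $\ell_1\cdots\ell_n$ then gives the PFD, with $h_{j_1\cdots j_{n-d}}=g_{i_1\cdots i_d}$ for complementary index sets. This is the only place where homogeneity of $f$ is used, and it is what makes the degree bound in (\ref{eqn:pfd secalgo}) automatic. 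The hypothesis $\deg(f)\geq d$ guarantees this degree is nonnegative. Duplicate $\ell_i$ cause no problem, because we keep track of index sets rather than distinct forms.

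\textbf{Step 2 (primary components).} By Theorem~\ref{thm:flats-pri-decomp}, $f\in I_{L,d}$ if and only if $f\in (I_S)^{d-n+|S|}$ for every flat $S$ with $|S|\geq n-d+1$.

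\textbf{Step 3 (symbolic powers; the main subtlety).} It remains to show that membership in $(I_S)^k$ is the same as vanishing to order at least $k$ along the linear subspace $\mathcal{V}(I_S)$. Choose $\ell_{i}$, $i\in T\subset S$, forming a basis of $\mathrm{span}\{\ell_i:i\in S\}$, and extend them to a full set of coordinates $y_1,\ldots,y_r$ of $R$ by a linear change of variables. Then $I_S=\langle y_1,\ldots,y_c\rangle$ with $c=\mathrm{rk}(S)$, which is a prime generated by variables.

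In these coordinates, $f\in\langle y_1,\ldots,y_c\rangle^k$ if and only if every monomial of $f$ has total degree at least $k$ in $y_1,\ldots,y_c$. This is exactly the statement that all partial derivatives of $f$ of order less than $k$ vanish on $\mathcal{V}(I_S)$, i.e.\ that $f$ vanishes to order at least $k$ there. In characteristic $p$ the derivative formulation should be replaced by the monomial (Taylor expansion) formulation, which we take as the definition of order of vanishing. Equivalently, powers of ideals generated by variables are primary and equal to their symbolic powers, so $(I_S)^k=I_S^{(k)}$.

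Combining Steps 1--3 gives the stated equivalence.
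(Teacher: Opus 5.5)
Your proposal is correct and takes the same route as the paper. The paper reduces existence of a degree-$d$ PFD to the membership $f\in I_{L,d}$ by clearing denominators, then reads off the vanishing conditions from the primary decomposition in Theorem~\ref{thm:flats-pri-decomp}. Your Steps 1 and 3 make explicit two details the paper leaves implicit: taking homogeneous components to get the degree bound on the numerators, and identifying membership in $(I_S)^k$ with vanishing to order at least $k$ by passing to linear coordinates.
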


\begin{theorem}\label{thm: PFD iff affine}
    Let $\ell_1,\ldots,\ell_n$ be $n$ nonzero linear polynomials in $R$, let $d\leq n$, and let $f\in R$ be a polynomial with $\deg(f)\geq d$. Then $f/(\ell_1\cdots\ell_n)$ has a PFD of degree $d$ if and only if $f$ vanishes to order at least $d-n+|S|$ on all flats $S$, where:
    \begin{itemize}
        \item $S$ is a flat of the hyperplane arrangement associated with the homogenizations of the $\ell_i$;
        \item the size of $S$ is at least $n-d+1$; and
        \item $S$ is not supported on the hyperplane at infinity.
    \end{itemize}
\end{theorem}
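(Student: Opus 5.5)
The plan is to reduce the statement to an ideal membership question, exactly as in the projective case, and then read off the answer from the affine primary decomposition of Theorem~\ref{thm:affine-pri-decomp}.

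\emph{Step 1: PFDs correspond to membership in $I_{L,d}$.} Suppose $f/(\ell_1\cdots\ell_n)$ has a PFD of degree $d$ as in (\ref{eqn:pfd secalgo}). Multiplying through by $\ell_1\cdots\ell_n$ gives
\[
f=\sum h_{j_1\cdots j_{n-d}}\prod_{i\notin\{j_1,\ldots,j_{n-d}\}}\ell_i ,
\]
and each product $\prod_{i\notin\{j_1,\ldots,j_{n-d}\}}\ell_i$ is a $d$-fold product of the $\ell_i$. Hence $f\in I_{L,d}$.

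Conversely, suppose $f=\sum_{|T|=d} g_T\prod_{i\in T}\ell_i$. Dividing by $\ell_1\cdots\ell_n$ produces a sum of terms $g_T/\prod_{i\notin T}\ell_i$. The only subtlety is the degree bound $\deg(g_T)\le \deg(f)-d$. In the affine (inhomogeneous) setting this bound is not automatic, because cancellation of top-degree parts could force high-degree coefficients. I would handle this by homogenizing.

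\emph{Step 2: control the degrees via homogenization.} Let $f^h$ be the homogenization of $f$ in $R'=K[x_1,\ldots,x_{r+1}]$, of degree $e=\deg f$. Working with the flats of the $\ell_i'$, I would show that $f\in I_{L,d}$ implies $x_{r+1}^k f^h\in J_{L,d}$ for some $k$. Since $J_{L,d}$ is generated in degree $d$, a homogeneous element of degree $e+k$ in it can be written with homogeneous coefficients of degree $e+k-d$. Dehomogenizing then gives coefficients of degree at most $e+k-d$, which is still too large if $k>0$.

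The fix is to obtain $f^h\in J_{L,d}$ directly, using the primary decomposition. The components of $J_{L,d}$ are $(J_S)^{d-n+|S|}$, and these fall into two types:
\begin{itemize}
\item If $x_{r+1}\notin J_S$, then $(J_S)^{m}$ is the homogenization-compatible lift of $(I_S)^m$. Membership of $f$ in $(I_S)^m$ is then equivalent to membership of $f^h$ in $(J_S)^m$, since homogenization commutes with ideals generated by powers of linear forms not involving the hyperplane at infinity.
\item If $x_{r+1}\in J_S$, the component imposes a condition on $f^h$ that is not detected after setting $x_{r+1}=1$.
\end{itemize}
I expect this second type of component to be the main obstacle. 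Either it requires an extra argument, or the intended reading of ``PFD of degree $d$'' in the affine case tolerates the weaker degree bound. If the strict bound turns out to fail in general, I would fall back to the interpretation in which the degree bound is $\deg(h)\le\deg(f)-d$ only up to the cancellation allowed by the common denominator. In that case the theorem reduces to membership $f\in I_{L,d}$.

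\emph{Step 3: translate membership into vanishing conditions.} By Theorem~\ref{thm:affine-pri-decomp},
\[
f\in I_{L,d}\iff f\in (I_S)^{d-n+|S|}
\]
for every $S$ satisfying the three listed conditions. Over a field, and for a prime $I_S$ generated by linear forms, membership $f\in (I_S)^m$ is exactly the statement that $f$ vanishes to order at least $m$ along $\calV(I_S)$. To see this, change coordinates so that the $\ell_i$ with $i\in S$ become variables. Then $(I_S)^m$ is a monomial-type power, so membership means every term of $f$ has degree at least $m$ in those variables. This is precisely vanishing to order $m$ along the linear subspace $\calV(I_S)$. Combining Steps 1--3 yields the claim.
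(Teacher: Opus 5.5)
Your Steps 1 and 3 are the paper's argument. The paper clears denominators to identify the existence of a PFD of degree $d$ with $f\in I_{L,d}$, then reads off the vanishing conditions from Theorem~\ref{thm:affine-pri-decomp}, interpreting $f\in (I_S)^m$ as vanishing to order $m$. That interpretation is legitimate because $x_{r+1}\notin J_S$ makes $\calV(I_S)$ a nonempty affine subspace, so after an affine change of coordinates $I_S$ is generated by coordinates. The paper never checks the bound $\deg(h)\le\deg(f)-d$ in the converse direction. The point you left open in Step 2 is a genuine defect of the statement, not something an extra argument could supply: with the bound enforced the theorem is false. So your fallback reading is forced, and it is exactly what the paper's proof establishes.

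More precisely, a decomposition with all $\deg(h)\le\deg(f)-d$ exists if and only if $f^h\in J_{L,d}$. For one direction, homogenize such a decomposition to degree $\deg f$. For the other, write $f^h$ with homogeneous coefficients of degree $\deg(f)-d$ (possible since $J_{L,d}$ is generated in degree $d$) and set $x_{r+1}=1$. By Theorem~\ref{thm:flats-pri-decomp}, this adds the conditions $f^h\in (J_S)^{d-n+|S|}$ for the flats with $x_{r+1}\in J_S$. These are exactly the components you observed become invisible after dehomogenizing.

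For a concrete failure, take $\ell_1=x$ and $\ell_2=x-1$ in $K[x,y]$, with $n=2$, $d=1$, $f=y$, and homogenizing variable $z$.
\begin{itemize}
\item The only flat of size at least $n-d+1=2$ is $\{1,2\}$, and $J_{\{1,2\}}=\langle x,z\rangle\ni z$. So the criterion in the statement is vacuous, consistent with $I_{L,1}=R$.
\item However, $y=a(x-1)+bx$ has no solution with $a,b\in K$. Hence $y/(x(x-1))$ has no decomposition $a/x+b/(x-1)$ with $\deg a,\deg b\le\deg(f)-d=0$.
\item The available decompositions, such as $y/(x-1)-y/x$, have numerators of degree $1$.
\item The discarded component $\langle x,z\rangle$ requires the top-degree part of $f$ to lie in $\langle x\rangle$, which $y$ violates.
\end{itemize}
So your argument, like the paper's, proves the theorem when ``PFD of degree $d$'' means $f\in I_{L,d}$ with no numerator degree bound. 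If the bound is kept, the correct criterion is $f^h\in J_{L,d}$, including the flats at infinity.
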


We now turn to the matter of computing PFDs. Recall the wishlist for a good PFD algorithm outlined in \S\ref{sec:intro}. We first focus on the notion of spurious poles.

\begin{definition}\label{defn:spur pole}
    A \emph{spurious pole} of a rational function is a denominator factor that can be removed upon a re-writing of the rational function.
\end{definition}

In Example~\ref{eg:intro}, the first decomposition of our rational function introduces a spurious pole at $x=0$; this is undesirable. Our notion of a PFD of degree $d$ guarantees that spurious poles are not introduced, as all of the poles in (\ref{eqn:pfd secalgo}) appear in both the original rational function and its PFD. However, before beginning any computation of PFDs of this form, we must ensure that spurious poles do not already exist in the input of $f/(\ell_1\cdots\ell_n)$.
To address this, consider the following definition: we say that a rational function $f/g$ with $f,g\in R$ nonzero polynomials is \emph{reducible} if $f,g$ share a common irreducible factor. Then, the input rational function $f/(\ell_1\cdots\ell_n)$ already having a spurious pole is equivalent to it being reducible. One also sees immediately that this is equivalent to $f\in\langle \ell_i\rangle$ for some $i\in[n]$. Algorithm~\ref{algo:reduced} below constructs a reduced expression for the rational function by finding and eliminating these spurious $\ell_i$ (if they exist).

\begin{algorithm}
	\caption{\texttt{ReducedExp}}
	\label{algo:reduced}
	\begin{flushleft}
		\textbf{Input:} rational function $f/(\ell_1\cdots\ell_n)$ where $f,\ell_1,\ldots,\ell_n\in K[x_1,\ldots,x_r]$ nonzero and $\deg(\ell_i)=1$
		
		\textbf{Output:} fully reduced expression for $f/(\ell_1\cdots\ell_n)$
	\end{flushleft}
	\begin{algorithmic}[1]
    \STATE { $L\leftarrow\{\}$}
    \FOR {$i$ from $1$ to $n$}
        \IF {$f\in\langle\ell_i\rangle$}
        \STATE {write $f=g\ell_i$ using \texttt{//}}
        \STATE {$f\leftarrow g$}
        \ELSE
        \STATE{add $\ell_i$ to $L$}
        \ENDIF
    \ENDFOR
    \RETURN {$f,L$, where the reduced rational function is given by $f/(\ell_{i_1}\cdots\ell_{i_m})$ with $\ell_{i_j}\in L$}
	\end{algorithmic}
\end{algorithm}

Once a reduced expression for $f/(\ell_1\cdots\ell_n)$ is obtained, one can proceed with the main PFD algorithm (Algorithm~\ref{algo:PFD}) to determine if the rational function has a PFD of degree $d$, with $d$ as large as possible. If such a PFD exists, then the algorithm returns one.

\begin{algorithm}
	\caption{\texttt{PFD}}
	\label{algo:PFD}
	\begin{flushleft}
		\textbf{Input:} fully reduced rational function $f/(\ell_1\cdots\ell_n)$ where $f,\ell_1,\ldots,\ell_n\in K[x_1,\ldots,x_r]$ nonzero and $\deg(\ell_i)=1$
		
		\textbf{Output:} PFD of degree $d\in[n]$ for $f/(\ell_1\cdots\ell_n)$ with $d$ maximum, if one exists
	\end{flushleft}
	\begin{algorithmic}[1]
        \STATE {$L \leftarrow (\ell_1,\ldots,\ell_n)$}
        \STATE {$d\leftarrow 0$}
        
        \FOR{$D$ from $1$ to $n$}
        \STATE {create ideal $I_{L,D}$}
        \IF {$f\in I_{L,D}$}
        \STATE {$d\leftarrow D$}
        \STATE {$C\leftarrow$ coefficients for quotient of $f$ with respect to generators of $I_{L,D}$ using \texttt{//} in \texttt{M2}}
        \ELSE
        \STATE {break}
        \ENDIF
        \ENDFOR
        \IF {$d=0$}
        \RETURN {no PFD of any degree possible}
        \ELSE
        \RETURN {PFD of degree $d$ in terms of $C$}
        \ENDIF
	\end{algorithmic}
\end{algorithm}
\newpage
\begin{remark}[Wishlist]
    We now explain why our algorithms satisfy the wishlist from \S\ref{sec:intro}.
\begin{itemize}
    \item[\ref{wishlist(i)}] \emph{Uniqueness of output:} Given a polynomial $f$ and an ordering $\ell_1,\ldots,\ell_n$ of the denominator factors of the rational function, our algorithm is deterministic in the output -- there are no probabilistic aspects of the algorithm. If we are in the case of a generic hyperplane arrangement and $f$ is homogeneous of degree $d$ (as in Theorem~\ref{thm: generic PFD}), then the output is also independent of any relabeling of the linear forms.
    \item[\ref{wishlist(ii)}] \emph{Does not introduce spurious poles:} This is guaranteed by the form of our output PFD (\ref{eqn:d PFD}), where denominator factors of the PFD are all subsets of the original $\ell_i$.
    \item[\ref{wishlist(iii)}] \emph{Commutes with summation:} This is guaranteed by the use of the \texttt{//} command in \texttt{Macaulay2}. This function uses polynomial division based on Gr\"obner bases, i.e., expressing polynomials in normal form with respect to the reduced Gr\"obner basis of the ideal $I_{L,d}$. The map sending polynomials to their normal form is a $K$-linear map.
    \item[\ref{wishlist(iv)}] \emph{Eliminates any spurious poles present in the input:} This is guaranteed by Algorithm~\ref{algo:reduced}.
\end{itemize}
\end{remark}

\begin{remark}
The algorithm above is highly customizable. Below we list examples of small changes that can be made to hone the algorithm for certain applications.
\begin{enumerate}
    \item \textit{Large $n$ or $r$:} Construction of the ideals $I_{L,d}$ in \texttt{Macaulay2} can take a long time when $n$ or $r$ is large. Below are two ways these large computation times can be circumvented.
    \begin{enumerate}
         \item  For a given choice of $d$, check if the numerator is in an ideal generated by some subset of the generators of $I_{L,d}$. If this is true, a PFD of degree $d$ is then obtainable with this restricted set of generators. This strategy is used below for the calculation of a ``large" coefficient coming from Feynman integrals (\S\ref{sssec:large feynman}).
        \item Suppose the numerator is in the ideal $I_{L,d}$ for some small choice of $d$. Rather than checking if the same numerator is in $I_{L,d}$ for larger $d$, consider the numerators in the PFD expansion for small $d$ and apply the PFD algorithm on them, also restricting to small choices of $d$ (if need be). Proceed iteratively. This iterative procedure will produce the same output as the original algorithm as a consequence of the fact that the algorithm commutes with summation \ref{wishlist(iii)}.
    \end{enumerate}
    \item \textit{Choosing PFD denominators:} Suppose one has a specific PFD form in mind, e.g. that does not have certain denominator factors. This can be achieved by restricting the generators of $I_{L,d}$ to the subset of them that do not produce the undesired denominator factors. If the numerator is a member of this restricted ideal, then the desired PFD can be generated. 
    \item \textit{Nonlinear denominator factors:} The algorithm above can readily be applied to the case where the $\ell_i$ are not linear. Unlike when all the $\ell_i$ are linear, the resulting PFD may contain terms with denominators of differing degrees.
\end{enumerate}    
\end{remark}

\section{Towards physics}\label{sec:physics}

In this section, we apply our algorithm from \S\ref{sec: PFDs} to compute PFDs of rational functions arising from two different contexts: Feynman integrals (\S\ref{subsec:feynman}) and flat-space wavefunction coefficients (\S\ref{subsec:wavefunc}). The code for these examples can be found \href{https://github.com/CJMDK/PFDs}{here}.

\subsection{Feynman integrals}\label{subsec:feynman}
In preparation for the approaching upgrade to the Large Hadron Collider \cite{AAA+}, much work has been done in developing efficient algorithms to aid the computation of Feynman integrals \cite{HvM,BFP,CGM}. Many of these algorithms are based on \emph{integration-by-parts} (IBP) \textit{reductions}, in which a Feymann integral is decomposed as a linear combination of \emph{master integrals} (MIs). These MIs are a choice of basis for the family of Feynman integrals under consideration. The coefficients of the linear combination are rational functions in the external kinematics and the dimensional regularization parameter $\epsilon$. These rational functions are often very large, and can make calculations cumbersome. The authors of \cite{BWW+} suggest storing the coefficients of IBP reductions in the form of a PFD, and introduce their own PFD algorithm as a ``powerful method to reduce the byte size of the analytic IBP reduction coeﬃcients". Their paper includes an application to IBP reduction coefficients arising from the Feynman integral associated with the \emph{two-loop five-point non-planar double pentagon diagram} shown in Figure \ref{fig: Feynman diagram}. These coefficients were calculated (with respect to a specific choice of MIs) in \cite{BBD+}.
The authors of \cite{HvM} suggest using these coefficients as a benchmark for testing PFD algorithms for Feynman integrals. In this section we naively apply our PFD algorithm to two of these coefficients, a ``small" and a ``large" example, and discuss the results. We hope this may serve as a starting point for further refinement.

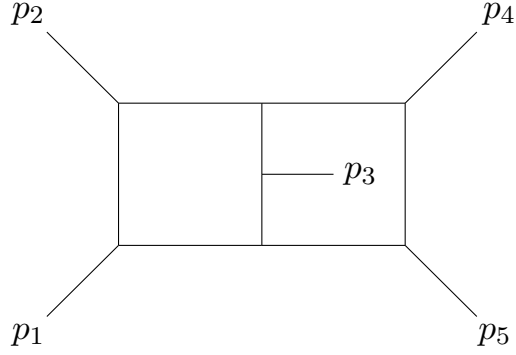
\begin{figure}[h]
\centering
\begin{tikzpicture}[scale=0.2ex]
\draw[black] (0,0) -- (-1,-1) node[scale=1.2, xshift=-1.2ex, yshift=-1.2ex]{$p_1$};
\draw[black] (0,0) -- (2,0);
\draw[black] (0,0) -- (0,2);
\draw[black] (0,2) -- (-1,3) node[scale=1.2, xshift=-1.2ex, yshift=1.2ex]{$p_2$};
\draw[black] (0,2) -- (2,2);
\draw[black] (2,2) -- (4,2);
\draw[black] (4,2) -- (5,3) node[scale=1.2, xshift=1.5ex, yshift=1.2ex]{$p_4$};
\draw[black] (4,2) -- (4,0);
\draw[black] (4,0) -- (5,-1) node[scale=1.2, xshift=1.2ex, yshift=-1.2ex]{$p_5$};
\draw[black] (4,0) -- (2,0);
\draw[black] (2,2) -- (2,1);
\draw[black] (2,1) -- (2,0);
\draw[black] (2,1) -- (3,1) node[scale=1.2, xshift=1.8ex, yshift=0]{$p_3$};
\end{tikzpicture}
\caption{Two-loop five-point non-planar double pentagon Feynman diagram with labelled external momenta $p_i$.}
\label{fig: Feynman diagram}
\end{figure}

\subsubsection{Small IBP coefficient}
This IBP coefficient is a rational function in six variables. These variables are the dimensional regularization parameter $\epsilon$ and the Mandelstam variables $s_{12}$, $s_{15}$, $s_{23}$, $s_{34}$, and $s_{45}$ (Lorentz invariant functions of the external momenta $p_i$ shown in Figure \ref{fig: Feynman diagram}). 
Explicitly, the polynomial ring here is $\mathbb{Q}[s_{12},s_{15},s_{23},s_{34},s_{45},\epsilon]$.
The numerator is a degree $11$ polynomial with $785$ terms. This polynomial can be accessed through the links provided in \cite{BBD+}. The denominator can be factorized as a product of the hyperplanes:
\begin{equation*}
    \begin{aligned}
    \ell_1&=8(1+4\epsilon)\\
    \ell_4&=s_{12}+s_{15}-s_{34}\\
    \ell_7&=s_{12}+s_{23}-s_{45}\\
    \ell_{10}&=s_{45}\\
\end{aligned}
\quad\quad
\begin{aligned}
    \ell_2&=s_{12}\\
    \ell_5&=-s_{15}+s_{23}+s_{34}\\
    \ell_8&=-s_{15}+s_{23}-s_{45}\\
    \ell_{11}&=s_{45}\\
\end{aligned}
\quad\quad
\begin{aligned}
    \ell_3&=s_{23}\\
    \ell_6&=s_{12}-s_{45}\\
    \ell_9&=s_{12}-s_{34}-s_{45}\\
    \ell_{12}&=s_{34}+s_{45}.
\end{aligned}
\end{equation*}
We find that the input rational function contains no spurious denominator factors and that the largest $d$ for which the numerator is in the ideal $I_{L,d}$ is $8$. This implies the existence of a PFD of degree $8$ as in (\ref{eqn:pfd secalgo}), with degree $4$ denominators. Algorithm \ref{algo:PFD} produces such a PFD with $173$ terms (where the number of distinct terms is the number of distinct denominators appearing in the PFD). These results were computed within seconds. 

\subsubsection{Large IBP coefficient}\label{sssec:large feynman}
This IBP coefficient is a rational function in the same six variables as above. To reduce computation time, one of the Mandelstam variables can be set to $1$. This variable can later be re-introduced through dimensional arguments. We choose to set $s_{12}=1$, and so the polynomial ring here is $\mathbb{Q}[s_{15},s_{23},s_{34},s_{45},\epsilon]$. The numerator is then a degree $30$ polynomial in five variables with $73763$ terms. As before, it can be accessed through the links in \cite{BBD+}. The denominator can be written as a product of the following $29$ hyperplanes:
\begin{equation*}
    \begin{aligned}
    \ell_1&=8(-1+\epsilon)\\
    \ell_4&=-1+4\epsilon\\
    \ell_7&=s_{15}-s_{23}\\
    \ell_{10}&=s_{12}+s_{23}\\
    \ell_{13}&=s_{15}-s_{23}-s_{34}\\
    \ell_{16}&=s_{12}-s_{45}\\
    \ell_{19}&=s_{23}-s_{45}\\
    \ell_{22}&=s_{12}-s_{34}-s_{45}\\
    \ell_{25}&=s_{45}\\
    \ell_{28}&=s_{34}+s_{45}\\
\end{aligned}
\quad\quad
\begin{aligned}
    \ell_2&=-1+2\epsilon\\
    \ell_5&=s_{15}\\
    \ell_8&=s_{23}\\
    \ell_{11}&=s_{12}+s_{15}-s_{34}\\
    \ell_{14}&=s_{23}+s_{34}\\
    \ell_{17}&=s_{12}-s_{45}\\
    \ell_{20}&=s_{12}+s_{23}-s_{45}\\
    \ell_{23}&=s_{12}+s_{15}-s_{34}-s_{45}\\
    \ell_{26}&=s_{15}-s_{23}+s_{45}\\
    \ell_{29}&=s_{34}+s_{45}.\\
\end{aligned}
\quad\quad
\begin{aligned}
    \ell_3&=-1+3\epsilon\\
    \ell_6&=s_{15}-s_{23}\\
    \ell_9&=s_{12}+s_{23}\\
    \ell_{12}&=s_{12}+s_{15}-s_{34}\\
    \ell_{15}&=s_{23}+s_{34}\\
    \ell_{18}&=s_{23}-s_{45}\\
    \ell_{21}&=s_{12}-s_{34}-s_{45}\\
    \ell_{24}&=s_{12}+s_{15}-s_{34}-s_{45}\\
    \ell_{27}&=s_{15}-s_{23}+s_{45}\\
\end{aligned}
\end{equation*}
Given that we have set $s_{12}=1$, we have suppressed the $30$th denominator hyperplane $\ell_{30}=s_{12}$. Without making any modifications to Algorithm \ref{algo:PFD}, the time it takes to construct the ideals $I_{L,d}$ grows exponentially with $d$, already taking hours at $d=7$. By choosing subsets of the generators of the ideals one can dramatically reduce computation time. The only drawback is that the numerator not being in a subset of $I_{L,d}$ does not rule out the non-existence of a PFD of degree $d$. Using this strategy, we have found that the numerator is in an ideal contained in the ideal $I_{L,17}$. This smaller ideal was generated using $5985$ of the $51895935$ generators of $I_{L,17}$ (we did not include $\ell_{22},\ldots,\ell_{29}$) and took $194$ seconds to construct. Testing that the numerator is in this ideal took $930$ seconds, and generating the corresponding PFD took $2217$ seconds. This PFD has $546$ terms (counted by distinct denominators) and degree $12$ denominators. Comparing with the times given in \cite[Table 2]{HvM}, our algorithm implemented in \texttt{Macaulay2} is competitive with other state-of-the-art PFD algorithms for Feynman integrals.

\subsection{Wavefunction coefficients}\label{subsec:wavefunc}

As mentioned in \S\ref{sec:intro}, one of the ambitions of modern theoretical physics is to develop an alternative picture of scattering amplitudes, one that moves away from Feynman integrals and takes a more geometric approach. Landmark papers in this direction include \cite{AT}, and show that certain scattering amplitudes can be represented in terms of canonical forms of polytopes such as the amplituhedron and associahedron. Similar developments have been made in the study of cosmological correlators, relating flat-space wavefunction coefficients and canonical forms of cosmological polytopes \cite{ABP}. Below we show an example of how Algorithm~\ref{algo:PFD} can be used to shed light on the structure of flat-space wavefunction coefficients.

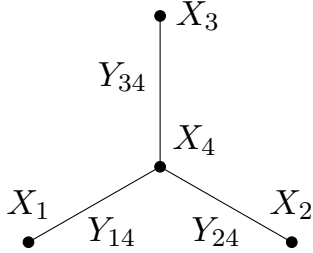
\begin{figure}[h]
\centering
\begin{tikzpicture}
\draw[black] (0,2) -- (0,0) node[scale=1.2, yshift=6ex, xshift=-2.5ex]{$Y_{34}$};
\draw[black] (1.732,-1) -- (0,0) node[scale=1.2, yshift=-4.3ex, xshift=3.7ex]{$Y_{24}$};
\draw[black] (-1.732,-1) -- (0,0) node[scale=1.2, yshift=-4.3ex, xshift=-3.2ex]{$Y_{14}$};
\filldraw[black] (0,0) circle (2pt) node[scale=1.2, above right]{$X_4$};
\filldraw[black] (0,2) circle (2pt) node[scale=1.2, xshift=2.5ex]{$X_3$};
\filldraw[black] (1.732,-1) circle (2pt) node[scale=1.2, yshift=2.5ex]{$X_2$};
\filldraw[black] (-1.732,-1) circle (2pt) node[scale=1.2, yshift=2.5ex]{$X_1$};
\end{tikzpicture}
\caption{Graph corresponding to the flat-space wavefunction coefficient $\psi$.}
\label{fig: wavefunction}
\end{figure}

\begin{example}
As explained in \cite{ABHJLP}, the flat-space wavefunction coefficient $\psi$ associated with the graph shown in Figure \ref{fig: wavefunction} can be generated by considering tubings of the graph. The result is the following expression for $\psi$:
\begin{equation*}
\psi=\frac{8 Y_{14}Y_{24}Y_{34}}{\ell_1\ell_2\ell_3\ell_4\ell_5}\bigg[\frac{1}{\ell_9}\bigg(\frac{1}{\ell_6}+\frac{1}{\ell_7}\bigg)+\frac{1}{\ell_{10}}\bigg(\frac{1}{\ell_6}+\frac{1}{\ell_8}\bigg)+\frac{1}{\ell_{11}}\bigg(\frac{1}{\ell_7}+\frac{1}{\ell_8}\bigg)\bigg],
\end{equation*}
\begin{equation*}
\begin{aligned}
    \ell_1&=X_1+Y_{14}\\
    \ell_3&=X_3+Y_{34}\\
    \ell_5&=X_1+X_2+X_3+X_4\\
    \ell_7&=X_2+X_4+Y_{14}+Y_{34}\\
    \ell_9&=X_1+X_2+X_4+Y_{34}\\
    \ell_{11}&=X_2+X_3+X_4+Y_{14}.\\
\end{aligned}
\quad\quad
\begin{aligned}
    \ell_2&=X_2+Y_{24}\\
    \ell_4&=X_4+Y_{14}+Y_{24}+Y_{34}\\
    \ell_6&=X_1+X_4+Y_{24}+Y_{34}\\
    \ell_8&=X_3+X_4+Y_{14}+Y_{24}\\
    \ell_{10}&=X_1+X_3+X_4+Y_{24}
\end{aligned}
\end{equation*}
The function $\psi/(8Y_{14}Y_{24}Y_{34})$ is the canonical form of the cosmological polytope whose facet hyperplanes are the denominator factors. In \cite{FPSW}, the authors conjecture a formula for the PFD of flat-space wavefunction coefficients associated with arbitrary tree-level cosmological Feynman diagrams. An amended version of this conjecture was recently proven in \cite{Dun}. That a PFD of this form exists in this case is clear from the fact that the numerator of $\psi$, a polynomial of degree $7$ with $204$ terms (once all the terms have been collected under a common denominator), is in the ideal:
\begin{equation*}
    I_{L,7}=\langle\ell_{i_1}\cdots\ell_{i_7}:1\le i_1<i_2<\cdots<i_7\le11\rangle,
\end{equation*}
(as can be checked using \texttt{Macaulay2}), and that the numerator is homogeneous of degree $7$. And so, using the command \texttt{//} one can express $\psi$ in terms of the generators of this ideal, yielding:
\begin{equation*}
\begin{split}
\psi &=\frac{1}{\ell_7\ell_9\ell_{10}\ell_{11}}+\frac{1}{\ell_5\ell_9\ell_{10}\ell_{11}}+\frac{1}{\ell_4\ell_8\ell_{10}\ell_{11}}+\frac{1}{\ell_4\ell_7\ell_{10}\ell_{11}}+\frac{1}{\ell_{4}\ell_{7}\ell_{9}\ell_{10}}+\frac{1}{\ell_{4}\ell_{6}\ell_{9}\ell_{10}}+\frac{1}{\ell_{3}\ell_{8}\ell_{10}\ell_{11}}\\& \ \ \ +\frac{1}{\ell_{3}\ell_{5}\ell_{10}\ell_{11}}-\frac{1}{\ell_{3}\ell_{4}\ell_{7}\ell_{9}}-\frac{1}{\ell_{3}\ell_{4}\ell_{6}\ell_{9}}+\frac{1}{\ell_{2}\ell_{7}\ell_{9}\ell_{11}}+\frac{1}{\ell_{2}\ell_{5}\ell_{9}\ell_{11}}-\frac{1}{\ell_{2}\ell_{4}\ell_{8}\ell_{10}}-\frac{1}{\ell_{2}\ell_{4}\ell_{6}\ell_{10}}\\& \ \ \ -\frac{1}{\ell_{2}\ell_{3}\ell_{8}\ell_{10}}-\frac{1}{\ell_{2}\ell_{3}\ell_{7}\ell_{9}}+\frac{1}{\ell_{2}\ell_{3}\ell_{5}\ell_{11}}+\frac{1}{\ell_{2}\ell_{3}\ell_{4}\ell_{6}}+\frac{1}{\ell_{1}\ell_{6}\ell_{9}\ell_{10}}+\frac{1}{\ell_{1}\ell_{5}\ell_{9}\ell_{10}}-\frac{1}{\ell_{1}\ell_{4}\ell_{8}\ell_{11}}\\& \ \ \ -\frac{1}{\ell_{1}\ell_{4}\ell_{7}\ell_{11}}-\frac{1}{\ell_{1}\ell_{3}\ell_{8}\ell_{11}}-\frac{1}{\ell_{1}\ell_{3}\ell_{6}\ell_{9}}+\frac{1}{\ell_{1}\ell_{3}\ell_{5}\ell_{10}}+\frac{1}{\ell_{1}\ell_{3}\ell_{4}\ell_{7}}-\frac{1}{\ell_{1}\ell_{2}\ell_{7}\ell_{11}}-\frac{1}{\ell_{1}\ell_{2}\ell_{6}\ell_{10}}\\& \ \ \ +\frac{1}{\ell_{1}\ell_{2}\ell_{5}\ell_{9}}+\frac{1}{\ell_{1}\ell_{2}\ell_{4}\ell_{8}}-\frac{1}{\ell_{1}\ell_{2}\ell_{3}\ell_{11}}-\frac{1}{\ell_{1}\ell_{2}\ell_{3}\ell_{10}}-\frac{1}{\ell_{1}\ell_{2}\ell_{3}\ell_{9}}+\frac{1}{\ell_{1}\ell_{2}\ell_{3}\ell_{8}}+\frac{1}{\ell_{1}\ell_{2}\ell_{3}\ell_{7}}\\& \ \ \ +\frac{1}{\ell_{1}\ell_{2}\ell_{3}\ell_{6}}+\frac{1}{\ell_{1}\ell_{2}\ell_{3}\ell_{5}}-\frac{1}{\ell_{1}\ell_{2}\ell_{3}\ell_{4}}.
\end{split}
\end{equation*}
Note that this is \emph{not} the PFD one arrives at for Figure~\ref{fig: Feynman diagram} using the process outlined in \cite{Dun}, though both PFDs have the same number of terms. This means that there can be multiple different PFDs of tree-level flat space wavefunction coefficients which have the same number of terms, and in which all numerators are $\pm1$ and all denominators have the same degree. In other words, such ``minimal" flat-space wavefunction coefficient PFDs are not unique; this may be related to the different triangulations of cosmological polytopes found in \cite{BDFJ}.

One may also wonder about the vanishing properties of the numerator. Using the \texttt{Matroids} package for \texttt{Macaulay2}, one finds that the matroid defined by lines $\ell_i$ has $88$ flats of five elements, $44$ of six elements, $12$ of seven elements, $1$ of eight elements, $3$ of nine elements, and $3$ of ten elements. Theorem \ref{thm:flats-pri-decomp} then tells us that the numerator vanishes on each of these flats $S$, with order $|S|-4$. For example, the flats of ten elements are:
\begin{equation*}
\begin{split}
    &\{\ell_{2},\ell_{3},\ell_{4},\ell_{5},\ell_{6},\ell_{7},\ell_{8},\ell_{9},\ell_{10},\ell_{11}\},\\
    &\{\ell_{1},\ell_{3},\ell_{4},\ell_{5},\ell_{6},\ell_{7},\ell_{8},\ell_{9},\ell_{10},\ell_{11}\},\\
    &\{\ell_{1},\ell_{2},\ell_{4},\ell_{5},\ell_{6},\ell_{7},\ell_{8},\ell_{9},\ell_{10},\ell_{11}\}.
\end{split}
\end{equation*}
\end{example}

\section{Future directions and outlook}\label{sec:future}

We present some potential directions for future work regarding PFDs.

In the generic case, PFDs of degree $d$ are unique (Theorem~\ref{thm: generic PFD}). But given arbitrary $f\in R$ and an arbitrary hyperplane arrangement $L$, there can be be many different such PFDs. This is because the products of $d$ of the $\ell_i$ need not be algebraically independent. To see this explicitly, consider the following example.
\begin{example}
Define the hyperplanes $\ell_1 = x-y, \ \ell_2=y+z, \ \ell_3=z, \ \ell_4=x+y\in K[x,y,z]$, and the polynomial $f$:
    \begin{align*}
        f &= 2x^2y^2-2y^4+12x^2yz+4xy^2z-2y^3z+10x^2z^2+4xyz^2\nonumber\\
        &=3x\ell_1\ell_2\ell_3+2y\ell_1\ell_2\ell_4+7x\ell_2\ell_3\ell_4\nonumber\\
        &=(-2x-5y)\ell_1\ell_2\ell_3+2y\ell_1\ell_2\ell_4+10\ell_1\ell_2\ell_3\ell_4+(2x+5y)\ell_2\ell_3\ell_4.
    \end{align*}
    We then have the following two PFDs of degree $3$:
    \begin{equation}\label{eqn:2pfd}
        \frac{f}{\ell_1\ell_2\ell_3\ell_4}=\frac{3x}{x+y}+\frac{2y}{z}+\frac{7x}{x-y}=-\frac{2x+5y}{x+y}+\frac{2y}{z}+10+\frac{2x+5y}{x-y}.
    \end{equation}
\end{example}

Our algorithm gives a unique partial fraction decomposition, but changing the choice of labeling or ordering of the $\ell_i$ could result in different PFDs.

\begin{question}
    Given a hyperplane arrangement $L$ and a polynomial $f\in I_{L,d}$, can one determine the number of unique PFDs of degree $d$?
\end{question}

Another question one can ask concerns when a term in a PFD is reducible, such as in the second PFD of (\ref{eqn:2pfd}), where the term $10$ appears.

\begin{question}
    What are necessary or sufficient conditions for when a term of a PFD of degree $d$ is reducible, or even a polynomial?
\end{question}

A first step in answering this question is the following result.

\begin{lemma}\label{lemma: reducibility}
Suppose $d=n-1\geq 1$, all of the $\ell_i$ are pairwise prime linear polynomials, and $f\in I_{L,d}$ is a polynomial. Then a PFD of degree $d$ for $f/(\ell_1\cdots\ell_n)$ has a reducible term if and only if $f\in\langle \ell_i\rangle$ for some $i\in[n]$.
\end{lemma}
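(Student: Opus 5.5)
With $d=n-1$, a PFD of degree $d$ is a representation
\[
\frac{f}{\ell_1\cdots\ell_n}=\sum_{j=1}^n \frac{h_j}{\ell_j},\qquad\text{equivalently}\qquad f=\sum_{j=1}^n h_j\prod_{i\neq j}\ell_i .
\]
A term $h_j/\ell_j$ is reducible exactly when $\ell_j\mid h_j$, since $\ell_j$ is irreducible. The plan is to prove the lemma by reducing this divisibility modulo a single $\ell_j$.

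The key observation is that every summand with index $i\neq j$ contains the factor $\ell_j$. Reducing the identity modulo $\ell_j$ therefore gives
\[
f\equiv h_j\prod_{i\neq j}\ell_i \pmod{\ell_j}.
\]
Because the $\ell_i$ are pairwise prime linear polynomials, each $\ell_i$ with $i\neq j$ is a unit multiple of an element not in the prime ideal $\langle\ell_j\rangle$. Hence $\prod_{i\neq j}\ell_i\notin\langle\ell_j\rangle$. Since $R/\langle\ell_j\rangle$ is a domain, we conclude that $h_j\in\langle\ell_j\rangle$ if and only if $f\in\langle\ell_j\rangle$.

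From this equivalence both directions follow, provided "reducible" is read termwise.
\begin{itemize}
\item If the PFD has a reducible term $h_j/\ell_j$, then $\ell_j\mid h_j$, so $f\in\langle\ell_j\rangle$.
\item Conversely, if $f\in\langle\ell_i\rangle$ for some $i$, then in any PFD of degree $d$ we get $\ell_i\mid h_i$, so the term $h_i/\ell_i$ is reducible (or zero, which I would count as reducible, or else note separately).
\end{itemize}

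The main point needing care is a degenerate case: a term with $h_j=0$, or one where $h_j$ is a nonzero constant multiple of nothing. I would state the convention that a zero term counts as reducible, since $0=0\cdot\ell_j$ shares the factor, or else handle it separately.

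I also need to check that linearity is used only to guarantee that $\langle\ell_j\rangle$ is prime and that the $\ell_i$ are non-associates. Nonconstant-ness matters here: if some $\ell_i$ were constant, it would be a unit, and the argument would still go through, but the statement assumes degree one.
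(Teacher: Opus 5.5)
Your argument is correct and is essentially the paper's proof. The paper also writes $f=\sum_j a_j\,\ell_1\cdots\hat{\ell}_j\cdots\ell_n$ and observes that every summand except the $i$th is divisible by $\ell_i$, so pairwise primality gives $f\in\langle\ell_i\rangle$ if and only if $\ell_i\mid a_i$, i.e.\ the $i$th term collapses to a polynomial $h$. Your explicit appeal to the primality of $\langle\ell_j\rangle$ and your convention for zero terms make precise what the paper leaves implicit (its $h$ may be $0$).
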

\begin{proof}
Since $f\in I_{L,d}$, it can be written in the form $f=\sum_{j=1}^na_j\ell_{1}\cdots\hat{\ell}_j\cdots\ell_{n}$ for some $a_{j}\in R$. Then we have the following chain of equivalences:
\begin{align}
& \ \ \ \ \ f\in\langle\ell_i\rangle\text{ for some } i\in[n]\nonumber\\
&\Leftrightarrow f=\ell_ig\text{ for some } g\in R\nonumber\\
&\Leftrightarrow \sum_{j=1}^na_j\ell_{1}\cdots\hat{\ell}_j\cdots\ell_{n}=\ell_ig\nonumber\\
& \Leftrightarrow a_i=\ell_ih\text{ for some } h\in R,\text{ since the $\ell_i$ are pairwise prime}
\nonumber\\
&\Leftrightarrow \frac{f}{\ell_1\cdots\ell_n}=h+\sum_{j=1,j\neq i}^n \frac{a_{j}}{\ell_j}\nonumber.
\end{align}
\end{proof}

By recasting the existence of PFDs as an ideal membership problem, we have proven a number of results about PFDs of rational functions on hyperplane arrangements. This led to new ways of understanding the geometry of adjoint polynomials and canonical forms, as well as a novel algorithmic approach for constructing PFDs of a tailored form. We have demonstrated how our findings have direct applications to a variety of problems in high-energy physics and hope that this paper inspires further study of PFDs by algebraists.

\subsection*{Acknowledgements}
We thank Bernd Sturmfels, Andreas von Manteuffel, and Julian Weigert  for valuable feedback and guidance during the development of this project. CdK was supported by the European Research Council through the synergy grant UNIVERSE+, 101118787. 

\noindent {\tiny Views and opinions
expressed are however those of the authors only and do not necessarily reflect those of the European Union or the European
Research Council Executive Agency. Neither the European Union nor the granting authority can be held responsible for them.}

\vspace{1em}

\noindent {\bf Authors' addresses:}

\noindent 
Claire de Korte, MPI-MiS Leipzig, Germany\hfill{\texttt{claire.dekorte@mis.mpg.de}}\\
Teresa Yu, MPI-MiS Leipzig, Germany \hfill{\texttt{teresa.yu@mis.mpg.de}}


\begin{thebibliography}{LNNR2}


\bibitem[AAA+]{AAA+}
A. Abada, M. Abbrescia, S.S. AbdusSalam, et al. HE-LHC: The High-Energy Large Hadron Collider. \textit{Eur. Phys. J. Spec. Top.} \textbf{228} (2019), pp.~1109–-1382. 

\bibitem[ABH+]{ABHJLP}
Nima Arkani-Hamed, Daniel Baumann, Aaron Hillman,
Austin Joyce, Hayden Lee, Guilherme L. Pimentel. Differential Equations for Cosmological Correlators. \textit{J. High Energ. Phys.} \textbf{2025} (2025), no.~9. \arxiv{2312.05303v2}

\bibitem[ABL]{ABL}
Nima Arkani-Hamed, Yuntao Bai, Thomas Lam. Positive Geometries and Canonical Forms. \textit{J. High Energ. Phys.} \textbf{2017} (2017), no.~39. \arxiv{1703.04541v2}

\bibitem[ABP]{ABP}
Nima Arkani-Hamed, Paolo Benincasa, Alexander Postnikov. Cosmological Polytopes and the Wavefunction of the Universe. \arxiv{1709.02813v1}

\bibitem[AT]{AT}
Nima Arkani-Hamed, Jaroslav Trnka. The Amplituhedron. \textit{J. High Energ. Phys.} \textbf{2014} (2014), no.~30. \arxiv{1312.2007v1}


\bibitem[BBD+]{BBD+}
Dominik Bendle, Janko B\"ohm, Wolfram Decker, Alessandro Georgoudis, Franz-Josef Pfreundt, Mirko Rahn, Pascal Wasser, Yang Zhang. Integration-by-parts reductions of Feynman integrals using Singular and GPI-Space. \textit{J. High Energ. Phys.} \textbf{2020} (2020), no.~79. \arxiv{1908.04301}

\bibitem[BDFJ]{BDFJ}
Anna Birkemeyer, Torben Donzelmann, Mieke Fink, Martina Juhnke. On cosmological polytopes, their canonical forms and their duals. \arxiv{2603.03894}

\bibitem[Bec]{Bec}
Matthias Beck. The partial-fractions method for counting solutions to integral linear systems. \textit{Discrete Comput. Geom.} \textbf{32} (2004), no.~4, pp.~437--446. \arxiv{math/0309332}

\bibitem[BEPV]{BEPV}
Sarah Brauner, Christopher Eur, Elizabeth Pratt, Raluca Vlad. Wondertopes. \textit{Adv. Math.} \textbf{480} (2025), part C, Paper No.~110516. \arxiv{2403.04610v2}

\bibitem[BFP]{BFP}
Giuseppe Bertolini, Gaia Fontana, Tiziano Peraro. CALICO: Computing Annihilators from Linear Identities Constraining (differential) Operators. \textit{J. High Energ. Phys.} \textbf{2025} (2025), no.~18. \arxiv{2506.13653v1}

\bibitem[Bry]{Bry}
Thomas Brylawski. The lattice of integer partitions. \textit{Discrete Math.} \textbf{6} (1973), pp.~201--219.

\bibitem[BTX]{BTX}
Ricardo Burity, \cb{S}tefan O. Toh\u{a}neanu, Yu Xie. Homological properties of ideals generated by fold products of linear forms. \textit{Michigan Math. J.} \textbf{74} (2024), no.~4, pp.~797--824. \arxiv{2004.07430}

\bibitem[BW]{BW}
Clemens Br\"user, Julian Weigert. Geometry of Adjoint Hypersurfaces for Polytopes. \arxiv{2511.13537} 

\bibitem[BW+]{BWW+}
Janko Boehm, Marcel Wittmann, Zihao Wu, Yingxuan Xu, Yang Zhang. IBP reduction coefficients made simple. \textit{J. High Energ. Phys.} \textbf{2020} (2020), no.~54. \arxiv{2008.13194}

\bibitem[CDE]{CDE}
John M. Campbell, Giuseppe De Laurentis, R. Keith Ellis. Analytic reconstruction with massive particles: one-loop amplitudes for $0\rightarrow \overline{q}qt\overline{t}H$. \textit{J. High Energ. Phys.} \textbf{2025} (2025), no.~147. \arxiv{2504.19909}

\bibitem[CGM]{CGM}
Miguel Correia, Mathieu Giroux, Sebastian Mizera. SOFIA: Singularities of Feynman Integrals Automatized. \textit{Comput. Phys. Commun.} \textbf{320}, (2026). \arxiv{2503.16601v2}

\bibitem[CH]{CH}
Aldo Conca, J\"urgen Herzog. Castelnuovo--Mumford regularity of products of ideals. \textit{Collect.\ Math.} \textbf{54} (2003), no. 2, pp.~137--152. \arxiv{math/0210065}

\bibitem[CT]{CT}
Aldo Conca, Manolis C. Tsakiris. Resolution of ideals associated to subspace arrangements. \textit{Algebra\ Number\ Theory} \textbf{16} (2022), no. 5, pp.~1121--1140. \arxiv{1910.01955}

\bibitem[DP]{DP}
Giuseppe De Laurentis, Ben Page. Ans\"atze for Scattering Amplitudes from $p$-adic Numbers and Algebraic Geometry. \textit{J. High Energ. Phys.}, \textbf{2022} (2022), no.~140. \arxiv{2203.04269}

\bibitem[Dun]{Dun}
Tyler Dunaisky. Representations of the Flat Space Wavefunction. \arxiv{2601.17619v1}


\bibitem[Fei]{Fei}
Eva Maria Feichtner. De Concini--Procesi wonderful arrangement models: a discrete geometer's point of view. \textit{Combinatorial and computational geometry,} pp.~333--360. Math.~Sci.~Res.~Inst.~Publ., 52. \textit{Cambridge University Press, Cambridge,} 2005.

\bibitem[FPSW]{FPSW}
Claudia Fevola, Guilherme L. Pimentel, Anna-Laura Sattelberger, Tom Westerdijk. Algebraic Approaches to Cosmological Integrals. \textit{Le Matematiche (Catania)} \textbf{80} (2025), no.~1, pp.~303--324. \arxiv{2410.14757v2}

\bibitem[Fro]{Fro}
Hadleigh Frost. The Algebraic Structure of the KLT Relations for Gauge and Gravity Tree Amplitudes. \textit{SIGMA Symmetry Integrability Geom. Methods Appl.} \textbf{17} (2021), Paper No.~101. \arxiv{2111.07257}

\bibitem[Gae]{Gae}
Christian Gaetz. Canonical forms of polytopes from adjoints. \arxiv{2504.07272v1}

\bibitem[Hod]{Hod}
Andrew Hodges. Eliminating spurious poles from gauge-theoretic amplitudes. \textit{J. High Energ. Phys.} \textbf{2013} (2013), no.~135. \arxiv{0905.1473v1}

\bibitem[HvM]{HvM}
Matthias Heller, Andreas von Manteuffel. MultivariateApart: Generalized Partial Fractions. \textit{Comput. Phys. Commun.} \textbf{271} (2022), Paper No. 108174. \arxiv{2101.08283}

\bibitem[KR]{KR}
Kathl\'en Kohn, Kristian Ranestad. Projective geometry of Wachspress coordinates. \textit{Found. Comp. Math.} \textbf{20} (2020), no.~5, pp.~1135--1173. \arxiv{1904.02123}

\bibitem[Lam]{Lam}
Thomas Lam. An invitation to positive geometries. \textit{Open Problems in Algebraic Combinatorics}, pp.~159--179. Proc. Sympos. Pure Math., 110, (2024). \arxiv{2208.05407}

\bibitem[Lei]{Lei}
E. K. Le\u{\i}nartas. Factorization of rational functions of several variables into partial fractions. \textit{Izv. Vyssh. Uchebn. Zaved. Mat.} \textbf{197} (1978), no.~10, pp.~47--51.

\bibitem[M2]{M2}
Daniel R. Grayson, Michael E. Stillman. \texttt{Macaulay2}, a software system for research in algebraic geometry. Available at \url{http://www2.macaulay2.com}.

\bibitem[RST]{RST}
Kristian Ranestad, Bernd Sturmfels, Simon Telen. What is Positive Geometry? \textit{Le Matematiche (Catania)} \textbf{80} (2025), no.~1, pp.~3--16. \arxiv{2502.12815}

\bibitem[Toh]{Toh}
\cb{S}tefan O. Toh\u{a}neanu. On the de Boer--Pellikaan method for computing minimum distance. \textit{J. Symbolic Comput.} \textbf{45} (2010), no.~10, pp.~965--974.


\bibitem[War]{War}
Joe Warren, Barycentric coordinates for convex polytopes. \textit{Adv. Comput. Math.} \textbf{6} (1996), pp.~97--108.












\end{thebibliography}
\end{document}